\documentclass[smallextended]{svjour3}       
\RequirePackage{fix-cm}

\usepackage{amsmath}
\usepackage{amssymb}
\usepackage{amsfonts}

\usepackage{bm}
\usepackage{amscd}
\usepackage{bbm,mathrsfs,dsfont}
\usepackage{graphicx}
\usepackage{subfigure}
\numberwithin{figure}{section}
\usepackage{cite}
\usepackage{latexsym}
\usepackage[figuresright]{rotating}
\usepackage[dvips,usenames]{color}
\usepackage[colorlinks=true,
            linkcolor=blue,
            citecolor=blue,
            urlcolor=blue,
            breaklinks=true]{hyperref}
\usepackage{enumitem}  

\numberwithin{equation}{section} 
\newtheorem{assumption}{Assumption}

\smartqed  
\begin{document}
\title{On Optimal Convergence Rates for the Nonlinear Schr\"{o}dinger Equation with a Wave Operator via Localized Orthogonal Decomposition}



\author{Hanzhang Hu \and
        Zetao Ma \and
        Lei Zhang \thanks{Corresponding author: \texttt{lzhang2012@sjtu.edu.cn}}
}

\titlerunning{On Optimal Convergence Rates for the NLSE with a Wave Operator via LOD}
\authorrunning{Hu et al.}

\institute{Hanzhang Hu \at
              School of Mathematics, Jiaying University, Meizhou 514015, Guangdong, China. \\
              \email{huhanzhang1016@163.com}           
           \and
           Zetao Ma \at
              School of Mathematical Sciences, Shanghai Jiao Tong University, 800 Dongchuan Road, Shanghai 200240, China. \\
              \email{770120068@qq.com}           
           \and
           Lei Zhang \at
              School of Mathematical Sciences, MOE-LSC and Institute of Natural Sciences, Shanghai Jiao Tong University, 800 Dongchuan Road, Shanghai 200240, China. \\
              \email{lzhang2012@sjtu.edu.cn}           
}

\date{Received: date / Accepted: date}

\maketitle

\begin{abstract}
In this paper, we develop a Localized Orthogonal Decomposition (LOD) method for the two-dimensional time-dependent nonlinear Schr\"{o}dinger equation with a wave operator. We prove that our method preserves conservation laws and admits a unique numerical solution; furthermore, we obtain unconditional (i.e., time-step restriction-free) optimal-order superconvergent \(L^p\) error estimates. 
To complement the theoretical analysis, we present a series of numerical simulations that verify the analytical results and further illustrate structural aspects of the problem.
\keywords{nonlinear Schr\"{o}dinger equation with wave operator \and 
         localized orthogonal decomposition \and 
         conservation law \and 
         convergence analysis \and 
         finite element method}
\subclass{35Q55 \and 65M60 \and 65M12 \and 81Q05}         
\end{abstract}


\section{Introduction }
The nonlinear Schr\"{o}dinger (NLS) equation models a wide range of physical phenomena and has been extensively studied over the past several decades \cite{Changz-2003, Lix, Wang2015, Akrivis-1991, Jin-2015, Wu-2011, Wu-2012}, owing to its broad applications in areas such as plasma physics, nonlinear optics, water waves, biomolecular dynamics, and protein chemistry. In particular, when accounting for the nonlinear interaction of monochromatic waves, a NLS equation incorporating a wave operator was introduced in \cite{kelley1965self}. This model also arises naturally in the study of soliton propagation in plasmas. In this work, we consider the following initial-boundary value problem for the two-dimensional NLS equation with a wave operator:
 \begin{align}\label{1.1}
 \begin{split}
   &\partial^2_{t} u+i\partial_{t} u-{\rm div} (b(\bm{x})\nabla u)+V(\bm{x})u+f(|u|^2)u=0,\quad (\bm{x},t)\in \Omega \times(0,T],\\
  &u(\bm{x},0)=u_0(\bm{x}),\ \partial_t u(\bm{x},0)=u_1(\bm{x}),\quad \bm{x}\in\bar{\Omega}, \\
 &u(\bm{x},t)=0,\quad \bm{x}\in \partial\Omega,\quad 0\leq t\leq T,
 \end{split}
 \end{align}
in a convex polygonal domain $\Omega\subset \mathbb{R}^2$, $\partial\Omega$ is the boundary of $\Omega$, where $ i=\sqrt{-1}$ is the imaginary unit and $u:\Omega\rightarrow \mathbb{C} $ is the complex-valued unknown solution. 
The initial function \( u_0(\bm{x}) \) is assumed to be smooth. The coefficient \( b(\bm{x}) \) is a symmetric, uniformly positive definite matrix function, satisfying the ellipticity condition: there exist parameter-independent constants \( 0 < b_{*} < b^{*} < \infty \) such that for all \( \xi \in \mathbb{R}^2 \) and almost every \( \bm{x} \in \Omega \),
\begin{equation*}
    b_{*}|\xi|^2 \leq \xi^{T} b(\bm{x})\xi \leq b^{*}|\xi|^2.
\end{equation*}
Furthermore, the potential \( V(\bm{x}) \) is a given real-valued function. The nonlinearity \( f: \mathbb{R} \to \mathbb{R} \) is the derivative of a potential function \( F: \mathbb{R} \to \mathbb{R} \), and we assume \( f \in C^2(\mathbb{R}) \). A typical example is the power-type nonlinearity:
\begin{align} \label{nonlinearity-p}
f(s) = \pm s^{(p-1)/2} \quad \text{and} \quad F(s) = \pm \frac{2}{p+1} s^{(p+1)/2} \quad \text{for } p > 1,
\end{align}
which corresponds to \( f(|u|^2) = \pm |u|^{p-1} \) and \( F(|u|^2) = \pm \frac{1}{p+1} |u|^{p+1} \) when applied to \( s = |u|^2 \).

When $b(\bm{x})=1$ is a common model, computing the inner product of equation \eqref{1.1} with
$\partial_ t u$, and then taking the real part, the  conservative
law is obtained by
\begin{align}
  E(t)&=\int_\Omega \frac12\big[|\partial_ t u|^2+|\nabla u|^2+ V|u|^2+ F(|u|^2) \big]dx =E(0).
 \end{align}

Over the years, significant efforts have been devoted to developing high-efficiency and high-precision algorithms for two-dimensional NLS equations. Since nonconservative finite difference or spectral schemes have been shown to exhibit unphysical nonlinear blow-up under certain conditions \cite{Zhang-1995}, conservative schemes are generally preferred. Early contributions include the work of \cite{Akrivis-1991}, which introduced Galerkin finite element methods combined with Crank–Nicolson time discretization, achieving second-order accuracy. Subsequent developments include compact split-step finite difference schemes for constant and variable coefficient problems \cite{Dehghan}, alternating direction implicit (ADI) methods to reduce computational cost in multi-dimensional settings \cite{Gao, Mohebbi}, and conservative finite difference formulations that are unconditionally stable and convergent with second-order accuracy in the maximum norm \cite{HuChen2016}. Despite these advances, challenges remain in balancing computational efficiency, algorithmic robustness, and adaptability to strongly nonlinear regimes.

To maintain conservation properties and avoid time-step restrictions, implicit Crank–Nicolson-type temporal discretizations—which offer second-order accuracy—are often preferred for solving \eqref{1.1}. However, the resulting discrete system for the two-dimensional nonlinear Schr\"{o}dinger equation is nonlinear and typically requires iterative solution methods, leading to high computational costs. It is therefore essential to design discrete spaces that respect the inherent structure of the problem, particularly to preserve conservation laws accurately. Localized Orthogonal Decomposition (LOD) method is a highly effective approach in this context. Originally introduced in \cite{maalqvist2014} (see also \cite{zhang_RPS, owhadi2017multigrid, altmann2021numerical, book_Peterseim, liuGRPS, hauck2023super}) for elliptic problems with heterogeneous and highly varying (non-periodic) coefficients, LOD provides a framework for embedding structural features of the differential operator directly into the discrete space. Its high-efficiency capacity for resolving fine-scale structures has established it as an influential technique, attracting broad interest due to its wide applicability to diverse multiscale and nonlinear problems \cite{gallistl2018numerical,henning2020computational,liu2021iterated,henning2022,maier2022multiscale,verfurth2022numerical,henning2023optimal,peterseim2024super,refId0}.

The core idea of the LOD method is to decompose the high-dimensional solution space using the energy inner product associated with the problem. This decomposition splits the space into a fine-scale subspace and its orthogonal complement—a coarse space endowed with excellent approximation properties. A key feature of LOD is its local construction: the low-dimensional coarse space is spanned by basis functions with local support, each obtained by solving patch-wise problems on small subdomains. These local problems are computationally inexpensive to solve and can be processed independently, making the method highly amenable to parallelization.

Interestingly, LOD also delivers significantly improved accuracy for single-scale problems with high regularity, as demonstrated in \cite{henning2022}. Building on this accuracy enhancement, the present paper develops an LOD-based multiscale method for solving the two-dimensional NLS equation with the wave operator.

The main contributions of this work are threefold: we establish the existence and uniqueness of solutions to the proposed discrete system, prove conservation properties, and derive optimal superconvergent $L^p$-error estimates without imposing any time-step restrictions. In contrast to the approaches in \cite{henning2022}, the existence, uniqueness, and $L^\infty$-boundedness of the finite element solutions for multi-dimensional settings are proved using Schaefer’s fixed point theorem. A novel theoretical achievement of this paper is the proof of a superconvergence rate of order $O(\tau^2 + H^4)$ in the $L^p$-norm for the proposed numerical scheme.

The remainder of this paper is organized as follows. 
Section \ref{sec:Preliminaries} introduces the preliminary concepts, key properties of the LOD method, and the Crank–Nicolson LOD scheme for the NLS with wave operator. In this section, we also present our main results regarding the corresponding approximation properties.
Next, Section \ref{sec:Numerical experiments} presents the numerical experiments conducted to validate the theoretical findings, along with a discussion of the results. Finally, Section \ref{sec:Proof_of_Main_Theorems} provides the detailed proofs of the main theorems.

\section{Preliminaries and LOD method}
\label{sec:Preliminaries}
\subsection{Some Notations and Useful Lemmas}
Now, we introduce the complex Sobolev spaces \( W^{m,p}(\Omega) \) on the domain \( \Omega \), equipped with the norm \( \|\cdot\|_{m,p} \). For notational simplicity, we define:
\[
H^m(\Omega) = W^{m,2}(\Omega), \quad \|\cdot\|_m = \|\cdot\|_{m,2}, \quad \|\cdot\|_\infty = \|\cdot\|_{L^\infty}, \quad \|\cdot\| = \|\cdot\|_{0,2}.
\]
Additionally, we define the space
\[
H^1_0(\Omega) = \left\{ v \in H^1(\Omega) \mid v = 0 \text{ on } \partial\Omega \right\},
\]
and denote by \( |\cdot|_1 \) the semi-norm in \( H^1(\Omega) \).

Let \( (\cdot, \cdot) \) denote the \( L^2 \)-inner product:
\[
(u, v) = \int_\Omega u(x) \overline{v(x)} \, dx, \quad \text{for } u, v \in L^2(\Omega),
\]
with the corresponding norm \( \|\cdot\| \).

Besides, we now state a discrete Gronwall's inequality that will be used in the subsequent analysis.
\begin{lemma}\label{lem2.2}
{(Gronwall's inequality \cite{Zhou})} Suppose that the discrete mesh function $\{w_n,n=1,2,\cdots,N\}$, $N\tau=T$, satisfies the inequality
\begin{equation*}
w_n\leq A+ \tau \sum^{n}_{k=1}B_k w_k,\quad 0\leq n\tau \leq T,
\end{equation*}
where $A$ and $B_k\ (k=1,2,\cdots,N)$ are nonnegative constants. Then,
\begin{equation}
\max_{1\leq n \leq N}w_n \leq Ae^{2 \tau \sum^{N}_{k=1}B_k},
\end{equation}
where $\tau$ is sufficiently small, such that $\tau(\underset{k=1,2,\cdots,N}{\max} B_k)\leq \frac{1}{2}$ .
\end{lemma}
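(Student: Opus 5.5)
The plan is to first absorb the diagonal term \(k=n\) into the left-hand side, and then run a standard induction on the resulting implicit inequality. Let \(\beta=\max_k B_k\). Since \(\tau B_n\le \tau\beta\le \tfrac12\), the hypothesis gives
\[
(1-\tau B_n)\,w_n \le A+\tau\sum_{k=1}^{n-1}B_k w_k,
\qquad\text{so}\qquad
w_n\le \frac{1}{1-\tau B_n}\Bigl(A+\tau\sum_{k=1}^{n-1}B_kw_k\Bigr).
\]
One caveat: if some \(w_n\) were negative this division step is harmless, because \(1-\tau B_n>0\). So no sign assumption on \(w_n\) is needed here. If needed, one may also replace \(w_k\) by \(\max(w_k,0)\) in the later steps.

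Next, we set \(S_n=A+\tau\sum_{k=1}^{n}B_kw_k\), with \(S_0=A\). The hypothesis reads \(w_n\le S_n\), hence
\[
S_n=S_{n-1}+\tau B_n w_n\le S_{n-1}+\tau B_n S_n,
\]
which uses \(B_n\ge0\), and therefore \(S_n\le (1-\tau B_n)^{-1}S_{n-1}\). Here one should check that \(S_{n-1}\ge 0\) is not needed, since we only divide by the positive factor \((1-\tau B_n)\). Iterating, we get
\[
w_n\le S_n\le A\prod_{k=1}^{n}(1-\tau B_k)^{-1}.
\]

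The key elementary estimate is \((1-x)^{-1}\le e^{2x}\) for \(0\le x\le\tfrac12\). This holds because \(-\ln(1-x)=\sum_{j\ge 1} x^j/j\le x\sum_{j\ge0}x^j=x/(1-x)\le 2x\). Applying it with \(x=\tau B_k\le \tfrac12\) gives
\[
\prod_{k=1}^{n}(1-\tau B_k)^{-1}\le \exp\Bigl(2\tau\sum_{k=1}^{n}B_k\Bigr)\le \exp\Bigl(2\tau\sum_{k=1}^{N}B_k\Bigr).
\]
The last step uses the nonnegativity of the \(B_k\), and together with \(A\ge0\) this yields the claimed bound uniformly in \(n\), hence also for \(\max_{1\le n\le N}w_n\).

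The only delicate point is the implicit \(k=n\) term. This is exactly where the smallness condition \(\tau\max_k B_k\le\tfrac12\) enters, and it is also what produces the factor \(2\) in the exponent. Everything else is routine bookkeeping.
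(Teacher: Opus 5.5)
Your proof is correct and complete. The recursion $S_n=S_{n-1}+\tau B_nw_n\le S_{n-1}+\tau B_nS_n$, together with $1-\tau B_n\ge\frac12>0$, gives $w_n\le S_n\le A\prod_{k=1}^{n}(1-\tau B_k)^{-1}$ without any sign assumption on $w_n$. The bound $(1-x)^{-1}\le e^{2x}$ on $[0,\frac12]$ then finishes it, and in fact with the sharper exponent $2\tau\sum_{k=1}^{n}B_k$; your first displayed step is not needed.

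The paper gives no proof of this lemma and simply quotes it from Zhou's monograph. Your argument (absorb the implicit $k=n$ term, then iterate) is the standard proof behind that citation.
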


The superconvergence result established in this paper relies on the choice of a suitable generalized finite element space for discretizing the NLS equation with wave operator. Specifically, we employ LOD space. In what follows, we provide a brief overview of the LOD methodology in a general context pertinent to our study and recall several key results that will be essential for the subsequent error analysis.

\subsection{Ideal LOD Space and Approximation Properties}

Let \( a(\cdot, \cdot) \) be an inner product defined on the Sobolev space \( H^1_0(\Omega) \). Given a source term \( f \in H^1_0(\Omega) \cap H^2(\Omega) \), we seek a function \( u \in H^1_0(\Omega) \) satisfying the variational equation:
\[
a(u, v) = (f, v) \quad \text{for all } v \in H^1_0(\Omega).
\]
The existence and uniqueness of the solution \( u \) are guaranteed by the Riesz representation theorem.

The LOD method constructs a low-dimensional discrete subspace designed to approximate \( u \) with high accuracy. The construction starts with a coarse space \( V_H \subset H^1_0(\Omega) \), defined as the standard $\mathbb{P}^1$ Lagrange finite element space over a quasi-uniform simplicial mesh of the domain \( \Omega \). Here, \( H \) denotes the mesh size, and \( \mathcal{T}_H \) represents the corresponding simplicial partition.

It is well known that if \( u \in H^2(\Omega) \), the Galerkin approximation \( u_H \in V_H \) converges optimally in the following sense:
\[
\|u - u_H\| + H \|u - u_H\|_{1} \leq C H^2 \|u\|_{H^2(\Omega)},
\]
where \( C \) is a generic positive constant that depends only on the mesh regularity.

A natural question arises: does there exist a low-dimensional subspace of \( H^1_0(\Omega) \) with the same dimension as \( V_H \), but offering significantly improved approximation properties? To explore this, we aim to enrich \( V_H \) by incorporating information from the underlying differential operator.

We begin by introducing the \( L^2 \)-projection \( P_H : L^2(\Omega) \to V_H \), defined by
\[
(P_H w, v_H) = (w, v_H) \quad \text{for all } w \in L^2(\Omega) \text{ and } v_H \in V_H.
\]
On quasi-uniform meshes, this projection is known to be \( H^1 \)-stable (see, e.g., \cite{Evans-PDE}). The kernel of \( P_H \) restricted to \( H^1_0(\Omega) \), given by
\[
W := \ker(P_H) = \{ w \in H^1_0(\Omega) \mid P_H w = 0 \},
\]
forms a closed subspace of \( H^1_0(\Omega) \), referred to as the fine space. This leads to the ideal \( L^2 \)-orthogonal decomposition:
\[
H^1_0(\Omega) = V_H \oplus W.
\]

To enhance \( V_H \) with fine-scale information while preserving its low dimensionality, we define the \( a(\cdot, \cdot) \)-orthogonal complement of \( W \):
\[
V_{\mathrm{LOD}} = \{ v \in H^1_0(\Omega) \mid a(v, w) = 0 \text{ for all } w \in W \}.
\]
By construction, \( \dim(V_{\mathrm{LOD}}) = \dim(V_H) = N_H \). This yields another ideal decomposition of \( H^1_0(\Omega) \):
\[
H^1_0(\Omega) = V_{\mathrm{LOD}} \oplus_a W,
\]
where \( V_{\mathrm{LOD}} \) and \( W \) are orthogonal with respect to \( a(\cdot, \cdot) \).

To assess the approximation quality of \( V_{\mathrm{LOD}} \), we define the LOD projection \( R_H u \in V_{\mathrm{LOD}} \) of \( u \in H^1_0(\Omega) \) via
\begin{equation}\label{lodproj}
a(R_Hu, v) = a(u, v) \quad \text{for}\  \forall \  v \in V_{\mathrm{LOD}}.
\end{equation}
It follows that \( a(R_H u - u, v) = 0 \) for all \( v \in V_{\mathrm{LOD}} \), implying \( R_H u - u \in W \) due to the \( a \)-orthogonality of the decomposition.

An important consequence of this construction is the following orthogonality relation, referred to as LOD-orthogonality: for any \( w \in W \),
\begin{equation}\label{l2.3}
a(u - R_Hu, w) = (g, w).
\end{equation}
As shown in \cite{henning2022}, the following estimates hold:
\[
\|u - R_H u\| + H \|u - R_H u\|_1 \leq C H^4 \|g\|_{H^2(\Omega)},
\]
\[
\|R_H u\|_{H^1} \leq C \|g\|,
\]
where \( C > 0 \) is a constant depending on \( f \) and the coercivity constant of \( a(\cdot, \cdot) \).

\subsection{Localization of the Orthogonal Decomposition}

In practice, the full LOD space \(V_{\mathrm{LOD}}\) is computationally inefficient because its basis functions have global support. This leads to high computational costs in constructing the basis and results in dense stiffness matrices in Galerkin discretizations. Fortunately, these basis functions are known to exhibit exponential decay outside small nodal neighborhoods, making it possible to accurately approximate them using localized functions. Below, we summarize the localization strategy introduced and analyzed in [24, 30] for efficiently approximating \(V_{\mathrm{LOD}}\).

Let \( \ell \in \mathbb{N}^+ \) be a localization parameter that controls the support size of the resulting basis functions (of order \( \mathcal{O}(\ell H) \)). For any simplex \( K \in \mathcal{T}_H \), define the \( \ell \)th-layer patch around \( K \) iteratively as:
\[
S_{\ell}(K) := \bigcup \left\{ T \in \mathcal{T}_H \mid T \cap S_{\ell-1}(K) \neq \emptyset \right\}, \quad \text{with } S_0(K) := K.
\]
That is, \( S_{\ell}(K) \) consists of \( K \) and \( \ell \) layers of adjacent grid elements. The restriction of the fine space \( W = \ker(P_H) \) to this patch is defined as:
\[
W(S_{\ell}(K)) := \left\{ w \in H^1_0(S_{\ell}(K)) \mid P_H w = 0 \right\} \subset W.
\]

To construct a nearly \( a(\cdot, \cdot) \)-orthogonal correction to a given coarse function \( v_H \in V_H \), we proceed as follows. For each \( K \in \mathcal{T}_H \) such that \( K \subset \text{supp}(v_H) \), find a local correction \( Q_{K,\ell} v_H \in W(S_{\ell}(K)) \) satisfying:
\begin{equation}\label{locproj}
a(Q_{K,\ell}v_H, w) = -a_{K}(v_H, w) \quad \text{for all } w \in W(S_{\ell}(K)),
\end{equation}
where \( a_K(\cdot, \cdot) \) denotes the restriction of the bilinear form \( a(\cdot, \cdot) \) to the element \( K \). Since this problem is confined to the patch \( S_{\ell}(K) \), it is local and computationally efficient to solve. The corrected function is then defined as:
\[
R_{\ell} v_H := v_H + \sum_{K \in \mathcal{T}_H} Q_{K,\ell} v_H.
\]

In practice, the corrected function \( R_{\ell}v_H \) is computed for each nodal basis function of \( V_H \). The LOD space, which approximates the ideal space \(V_{\mathrm{LOD}}\), is then defined as:
\begin{equation}\label{l2.9}
V_{\ell,\mathrm{LOD}} := \{R_{\ell}v_H \mid v_H \in V_H\}.
\end{equation}
Note that if \( \ell \) is chosen large enough so that \( S_{\ell}(K) = \Omega \), then by equation \eqref{locproj} we have:
\[
a(R_{\infty}v_H, w) = \sum_{K \in \mathcal{T}_H} \bigl( a_K(v_H, w) + a(Q_{K,\ell}v_H, w) \bigr) = 0 \quad \text{for all } w \in W.
\]
Hence, the functions \( R_{\infty}v_H \) span the \( a(\cdot, \cdot) \)-orthogonal complement of \( W \); that is, they exactly span the ideal space \(V_{\mathrm{LOD}}\). For smaller values of \( \ell \), it is natural to ask how well \( V_{\ell,\mathrm{LOD}} \) approximates \(V_{\mathrm{LOD}}\). The following lemma from \cite{henning2022} addresses this.

\begin{lemma}
Under the general assumptions of this section and assuming \( g \in H^1_0(\Omega) \cap H^2(\Omega) \), let \( V_{\ell,\mathrm{LOD}} \) be defined as in \eqref{l2.9}, and let \( R_{\ell}u \in V_{\ell,\mathrm{LOD}} \) be the Galerkin approximation of \( u \), i.e., the solution to
\[
a(R_{\ell}u, v) = (g, v) \quad \text{for all } v \in V_{\ell,\mathrm{LOD}}.
\]
There exists a generic constant \( \rho > 0 \) (depending on \( a(\cdot, \cdot) \), but independent of \( \ell \) and \( H \)) such that:
\begin{align}\label{l2.10}
\begin{split}
\|u - R_{\ell}u\| &\leq C \left(H^4 + e^{-\rho \ell}\right) \|g\|_{H^2(\Omega)}, \\
\|u - R_{\ell}u\|_{H^1(\Omega)} &\leq C \left(H^3 + e^{-\rho \ell}\right) \|g\|_{H^2(\Omega)}.
\end{split}
\end{align}
Here, \( C > 0 \) depends on the coercivity and continuity constants of \( a(\cdot, \cdot) \), as well as on \( \Omega \), but is independent of \( \ell \), \( H \), and \( u \).
\label{lem:lod}
\end{lemma}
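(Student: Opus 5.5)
The estimate splits into the ideal-LOD error and the localization error. By Céa's lemma in the energy norm $\|v\|_a := a(v,v)^{1/2}$, which is equivalent to the $H^1$-norm by coercivity and continuity, $R_\ell u$ is the $a$-best approximation of $u$ in $V_{\ell,\mathrm{LOD}}$. It therefore suffices to exhibit one good element of $V_{\ell,\mathrm{LOD}}$. The natural candidate is $R_\ell v_H$ with $v_H := P_H R_H u$. Since $R_H u \in V_{\mathrm{LOD}}$ and $R_H u - P_H R_H u \in W$, we have $R_H u = R_\infty P_H R_H u$. This gives
\[
\|u-R_\ell u\|_a \le \|u - R_H u\|_a + \|(R_\infty - R_\ell) P_H R_H u\|_a .
\]

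For the first term we need $\|u-R_Hu\|_1\le CH^3\|g\|_2$. Write $e=u-R_Hu\in W$. By the LOD-orthogonality \eqref{l2.3}, $\|e\|_a^2=a(u-R_Hu,e)=(g,e)=(g-P_Hg,e)$ since $P_He=0$. Applying the same trick once more,
\[
(g-P_Hg,e)=(g-P_Hg,e-P_He)\le \|g-P_Hg\|\,\|e-P_He\|\le CH^2\|g\|_2\cdot CH\|e\|_1 .
\]
This yields $\|e\|_1\le CH^3\|g\|_2$. The $L^2$ bound $H^4$ then follows either from an Aubin--Nitsche duality argument, or directly from $\|e\|=\|e-P_He\|\le CH\|e\|_1$. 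We may also simply quote these ideal estimates stated earlier from \cite{henning2022}.

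For the second term we invoke the standard exponential-decay estimate for the localized correctors (\cite{maalqvist2014}, \cite{henning2022}):
\[
\|(R_\infty-R_\ell)v_H\|_a\le C\ell^{d/2}e^{-\rho'\ell}\|v_H\|_a .
\]
We absorb $\ell^{d/2}$ by slightly reducing $\rho$. Combined with the $H^1$-stability of $P_H$ and $\|R_Hu\|_1\le C\|g\|$, this gives $Ce^{-\rho\ell}\|g\|$. Together with the first term this proves the $H^1$ estimate in \eqref{l2.10}.

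The $L^2$ estimate with $H^4+e^{-\rho\ell}$ is the main obstacle, since naive bounds lose a factor. We treat it by duality. Let $z\in H^1_0$ solve $a(v,z)=(u-R_\ell u,v)$ for all $v$. Galerkin orthogonality gives
\[
\|u-R_\ell u\|^2 = a(u-R_\ell u,\, z - R_\ell z) \le \|u-R_\ell u\|_a\,\|z-R_\ell z\|_a .
\]
With only an $L^2$ right-hand side, the $H^1$ argument above applied to $z$ gives $\|z-R_\ell z\|_a\le C(H+e^{-\rho\ell})\|u-R_\ell u\|$. This yields only $H^4+He^{-\rho\ell}$, which is already sufficient, up to the $e^{-\rho\ell}$ term. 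If the cross term needs care, we bound it crudely by $e^{-\rho\ell}$, since $H\le \operatorname{diam}\Omega$. The delicate point is ensuring that the decay constant $\rho$ is uniform in $H$ and $\ell$ and that the $\ell$-polynomial factors are absorbed; for this we rely on the cited localization theory rather than reproving the decay.
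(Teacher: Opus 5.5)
Your proof is correct. The paper gives no proof of this lemma and imports it from \cite{henning2022}; your argument (C\'ea's lemma with the comparison element $R_\ell P_H R_H u$, the ideal $H^3$ bound from LOD-orthogonality and the approximation properties of $P_H$, the exponential decay of the localized correctors, and Aubin--Nitsche duality for the $L^2$ bound) is the standard reasoning behind that citation. Two small remarks: first, the $L^2$ step needs no special care, since $(H^3+e^{-\rho\ell})(H+e^{-\rho\ell})\le C(H^4+e^{-\rho\ell})$ directly for bounded $H$. Second, the decay theory of \cite{maalqvist2014} does apply to $W=\ker(P_H)$, because this space coincides with the kernel of the weighted Cl\'ement operator $v\mapsto\sum_z \frac{(v,\lambda_z)}{(1,\lambda_z)}\lambda_z$ used there, where $\lambda_z$ are the interior nodal basis functions of $V_H$.
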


Choosing \( \ell \geq 4|\log H| / \rho \) ensures that the optimal convergence rates—\( \mathcal{O}(H^4) \) in the \( L^2 \)-norm and \( \mathcal{O}(H^3) \) in the \( H^1 \)-norm—are preserved. Although \( \rho \) is generally unknown, numerical experience and literature (e.g., \cite{henning2022}) show that small values of \( \ell \) are often sufficient to achieve optimal accuracy relative to the mesh size \( H \). For the subsequent error analysis, we will therefore work within the ideal LOD framework of Section 2.1, effectively neglecting the effects of localization and disregarding the exponentially small truncation error. This is equivalent to working with the localized LOD space with \(\ell\) chosen sufficiently large so that Lemma \ref{lem:lod} holds.

\begin{remark}
    The estimates in Lemma \ref{lem:lod} can be further refined. For example, the exponential term often depends only on the \( L^2 \)-norm of \( g \), rather than its full \( H^2 \)-norm. Moreover, the decay rate for the \( L^2 \)-error is generally faster than that for the \( H^1 \)-error.
\end{remark}


\subsection{The Crank--Nicolson LOD Scheme}
In this paper, we make the following regularity assumptions on the solution of \eqref{1.1}:
\begin{assumption}[Regularity Assumptions]\label{ass:regularity}
We assume the following regularity conditions hold:
\begin{enumerate}[label=(\Roman*), leftmargin=*, itemsep=0.5ex]
    \item $u \in H^4(0,T; W_0^{1,p}(\Omega) \cap W^{2,p}(\Omega))$ for $2 \leq p < \infty$;
    \item $u_0 \in W_0^{1,p}(\Omega) \cap W^{2,p}(\Omega)$ with $\Delta u_0 \in W_0^{1,p}(\Omega)$;
    \item $V \in W^{2,p}(\Omega)$;
    \item $\partial_t^3 u,\ \partial_t^4 u \in L^2(0,T; L^2(\Omega))$;
    \item \label{ass:time_regularity} For optimal convergence rates in the error analysis (when $p=3$ in \eqref{nonlinearity-p}), we additionally require
    \[
    \partial_t u \in L^2(0,T;W^{4,p}(\Omega)), \qquad \partial_t^2 u \in L^2(0,T;W^{4,p}(\Omega)).
    \]
\end{enumerate}
\end{assumption}

The weak formulation of problem \eqref{1.1} is to find \( u \in H^1_0(\Omega) \) such that
\begin{align}\label{u}
  (\partial^2_t u, \varphi) + i (\partial_t u, \varphi) + (b\nabla u, \nabla \varphi) + (V u, \varphi) + (f(|u|^2) u, \varphi) = 0, \quad \forall \varphi \in H^1_0(\Omega).
\end{align}

Let \( R_H : H^1_0(\Omega) \to V_{\mathrm{LOD}} \) be the elliptic projection operator defined by
\begin{align}\label{uproj}
  a(R_H u - u, \varphi) = (b\nabla (R_H u - u), \nabla \varphi) + (V (R_H u - u), \varphi) = 0, \quad \forall \varphi \in V_{\mathrm{LOD}}.
\end{align}

We begin by introducing the time discretization. Let \( \tau = T/N \) denote the time step size and define the temporal nodes \( t^n = n\tau \) with the corresponding function values \( \phi^n = \phi(\cdot, t^n) \). The following finite difference operators and averaging operators will be used:
\begin{align*}
  \delta_{\hat{t}} \phi^n &= \frac{\phi^{n+1} - \phi^{n-1}}{2\tau}, &
  \delta_t \phi^n &= \frac{\phi^{n+1} - \phi^n}{\tau}, \\
  \delta_{\bar{t}} \phi^n &= \frac{\phi^n - \phi^{n-1}}{\tau}, &
  \delta^2_t \phi^n &= \frac{\phi^{n+1} - 2\phi^n + \phi^{n-1}}{\tau^2}, \\
  \bar{\phi}^n &= \frac{\phi^{n-1} + \phi^{n+1}}{2}, &
  \phi^0 &= \frac{\phi^{-1} + \phi^1}{2}.
\end{align*}
For the error analysis, we define the following \( L^2 \)-norms:
\begin{align*}
  \|\phi\|_{L^2(0,T;L^2)} &= \left( \int_0^T \|\phi(\cdot,t)\|^2  dt \right)^{1/2}, \\
  \|\phi\|_{L^2(t^{n-1},t^n;L^2)} &= \left( \int_{t^{n-1}}^{t^n} \|\phi(\cdot,t)\|^2  dt \right)^{1/2}.
\end{align*}

We now present the fully discrete LOD method for \eqref{1.1}. The scheme is formulated as follows: for \( n = 1, \dots, N \), find \( u^n_H \in V_{{LOD}} \) such that
\begin{align}\label{uh}
  (\delta^2_t u^n_H, \varphi) + i (\delta_{\hat{t}} u^n_H, \varphi) + (b\nabla \bar{u}^n_H, \nabla \varphi) + (V \bar{u}^n_H, \varphi) + (\tilde{f}(|u^{n+1}_H|^2, |u^{n-1}_H|^2) \bar{u}^n_H, \varphi) = 0
\end{align}
holds for all \( \varphi \in V_{\mathrm{LOD}} \). The scheme is initialized by setting
\[
u^0_H = u_{\mathrm{LOD}}(x,0) \quad \text{and} \quad u^{-1}_H = u^1_H - 2\tau u_{1,H},
\]
where \( u_{1,H} \) is a suitable approximation of \( \partial_t u(x,0) \). Here, the nonlinear term employs the averaged function
\[
\tilde{f}(x, y) = \frac{F(x) - F(y)}{x - y} = \int_0^1 f((1 - s)x + s y)  ds, \quad x, y \in \mathbb{R}^+.
\]

Here, we present the main results of the proposed numerical scheme.
\begin{theorem}\label{main_thm_2.1}
Suppose that \( u^n \) is the exact solution of \eqref{u} at \( t = t^n \), and \( u^n_H \) is the numerical solution satisfying the discrete equations \eqref{uh}. If the initial data are chosen such that \( R_H u_0 = u^0_H \), then for \( 2 \leq p < \infty \), the following error estimate holds
\begin{equation}
 \max_{1\leq n \leq N}\| u^n-u^n_H \|_{L^p}\leq C(\tau^2+H^{4}),
\end{equation}
where the constant $C > 0$ is independent of the mesh size $H$ and the time step $\tau$, but may depend on $\Omega$, $b$, and the regularity assumptions stated in Assumption \ref{ass:regularity}. 
\end{theorem}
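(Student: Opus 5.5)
The plan is the standard splitting through the LOD (elliptic) projection $R_H$ of \eqref{uproj}:
\[
u^n-u^n_H=(u^n-R_Hu^n)+(R_Hu^n-u^n_H)=:\eta^n+\theta^n .
\]
The projection error is handled directly. Since $a(\eta^n,w)$ vanishes against $V_{\mathrm{LOD}}$, we have $\eta^n\in W$. Applying Lemma \ref{lem:lod} with the source $g=-\partial_t^2u-i\partial_tu-f(|u|^2)u$ (in the ideal setting, $\ell$ large) gives $\|\eta^n\|\le CH^4\|g\|_{H^2}$, and similarly for $\delta_t^2\eta^n$ and $\delta_{\hat t}\eta^n$. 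This is where Assumption \ref{ass:regularity}\ref{ass:time_regularity} enters. To reach $L^p$ I would use Sobolev embedding $H^1\hookrightarrow L^p$ in 2D together with the $H^1$ bound. However, $\|\eta\|_1\sim H^3$ only, so the $L^p$ bound for $\eta$ needs more care. I would exploit $\eta\in W$, i.e. $P_H\eta=0$, which gives
\[
\|\eta\|_{L^p}\le \|\eta-P_H\eta\|_{L^p}\le CH\|\eta\|_{W^{1,p}}.
\]
A $W^{1,p}$ version of the LOD estimate, $\|\eta\|_{W^{1,p}}\le CH^3\|g\|_{W^{2,p}}$, then follows from the same localized argument on each element, using the $W^{2,p}$ regularity assumed.

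For $\theta^n$, subtract \eqref{uh} from \eqref{u} evaluated at $t^n$ and averaged. This yields, for all $\varphi\in V_{\mathrm{LOD}}$,
\[
(\delta_t^2\theta^n,\varphi)+i(\delta_{\hat t}\theta^n,\varphi)+a(\bar\theta^n,\varphi)=(R^n,\varphi)-(\delta_t^2\eta^n+i\delta_{\hat t}\eta^n,\varphi)+(\mathcal N^n,\varphi).
\]
Here $R^n=O(\tau^2)$ is the Taylor truncation, controlled by $\partial_t^3u,\partial_t^4u\in L^2(L^2)$, and $\mathcal N^n$ is the difference of the nonlinear terms. The key test is $\varphi=\delta_{\hat t}\theta^n=(\theta^{n+1}-\theta^{n-1})/(2\tau)$, followed by taking real parts. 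The $i$-term then drops out, the first term telescopes to $\frac1{2}\delta_{\bar t}\|\delta_t\theta^{n}\|^2$, and the $a$-term telescopes to $\frac12\delta_{\bar t}$ of $\frac12[a(\theta^{n+1},\theta^{n+1})+a(\theta^n,\theta^n)]$. Summing in $n$ and bounding all right-hand terms by $\tau\sum(\|\delta_t\theta^k\|^2+\|\theta^k\|_1^2)+C(\tau^2+H^4)^2$ sets up Lemma \ref{lem2.2}. The conclusion is
\[
\|\delta_t\theta^n\|+\|\theta^n\|_1\le C(\tau^2+H^4),
\]
after which $H^1\hookrightarrow L^p$ gives the $L^p$ bound for $\theta^n$. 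The initial errors vanish because $\theta^0=0$ by the choice $R_Hu_0=u^0_H$, and $\theta^{-1}$ is controlled by a choice of $u_{1,H}$ that is accurate to the same order.

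The main obstacle is the nonlinear term. Writing $\tilde f(|u^{n+1}|^2,|u^{n-1}|^2)\bar u^n-\tilde f(|u_H^{n+1}|^2,|u_H^{n-1}|^2)\bar u_H^n$ and using $f\in C^2$ gives a local Lipschitz bound, but only provided $\|u_H^n\|_\infty$ is uniformly bounded. Moreover, this bound must hold without any restriction on $\tau$. I would first try the cut-off/bootstrap strategy:
\begin{itemize}
\item Replace $f$ by a truncation that is globally Lipschitz beyond the level $2\max\|u\|_\infty$.
\item Obtain existence of the truncated discrete solution from Schaefer's fixed point theorem.
\item Prove the $H^1$ error bound for it as above.
\item Show via an inverse inequality on $\theta$ that it is $L^\infty$-bounded below the cut-off; this works when $\tau^2+H^4\le \epsilon\, H$, since $\|\theta\|_\infty\le C|\log H|^{1/2}\|\theta\|_1$ in 2D.
\end{itemize}
The resulting condition is not unconditional, so when $\tau$ is large relative to $H$ I would switch to the energy conservation of \eqref{uh}. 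Testing \eqref{uh} with $\delta_{\hat t}u_H^n$ gives a discrete energy, which bounds $\|u_H^n\|_1$ uniformly. This suffices for the nonlinearity in $L^p$ by Sobolev embedding when $f$ is of power type. This case split, combined with controlling the cross terms $(\mathcal N^n,\delta_{\hat t}\theta^n)$ in $L^{p}$–$L^{p'}$ pairings, is where I expect the real work.
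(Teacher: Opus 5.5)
\textbf{There is a genuine gap, in the nonlinear step you defer.} Your splitting $u^n-u^n_H=\eta^n+\theta^n$ matches the paper's Lemmas~\ref{lem3.1} and~\ref{lem3.5}. So does the $L^p$ bound for $\eta$: LOD-orthogonality gives a $W^{1,p}$ rate $H^3$, and $P_H\eta=0$ supplies the extra power of $H$, which is cleaner than the paper's duality step. So does the energy test with $\delta_{\hat t}\theta^n$, followed by Lemma~\ref{lem2.2} and $H^1\hookrightarrow L^p$. The problem is that the nonlinear term needs a uniform bound on $u^n_H$ with no coupling between $\tau$ and $H$. Your truncation plus inverse (or discrete Sobolev) inequality only covers a coupled regime, as you note. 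Your fallback for the complementary regime does not hold at the generality of the theorem. The discrete energy of Theorem~\ref{thm:conservation_property} bounds $\|u^n_H\|_1$ only when $F$ is bounded below. The paper, however, explicitly admits the focusing case $f(s)=-s^{(p-1)/2}$. There, in 2D, the term $-\frac{1}{p+1}\int_\Omega|u_H|^{p+1}$ cannot be absorbed by $|u_H|_1^2$ without a smallness assumption. It also admits general $f\in C^2$, for which an $H^1$ bound does not control $f(|u_H|^2)$ in two dimensions. For defocusing power nonlinearities with coercive $a(\cdot,\cdot)$, your energy route would suffice on its own and make the truncation superfluous. But that is a strictly smaller class than the theorem covers.

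\textbf{The missing idea}, used in the paper's Theorem~\ref{thm2.2}, is to keep the cut-off/Schaefer argument in \emph{both} regimes. When $\tau\ge H$, you get an $H$-independent $L^\infty$ bound on the error from the error equation itself.
\begin{itemize}
\item Write $e^n_H=-\theta^n$. Since $H\le\tau$, the energy estimate gives $\|\delta_te^n_H\|+\|e^n_H\|\le C\tau^2$. Hence $\|\delta_t^2e^n_H\|\le(\|\delta_te^n_H\|+\|\delta_te^{n-1}_H\|)/\tau\le C\tau$.
\item Define the discrete Laplacian by $-(\Delta_Hv,\varphi)=a(v,\varphi)$ for all $\varphi\in V_{\mathrm{LOD}}$. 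Reading \eqref{error-CN-LM} as an elliptic problem on $V_{\mathrm{LOD}}$ then gives $\|\Delta_H\bar e^n_H\|\le C\tau$.
\item The discrete Gagliardo--Nirenberg inequality $\|v\|_{L^\infty}\le C\|v\|^{1/2}\|\Delta_Hv\|^{1/2}$ then yields an $L^\infty$ error bound of order $\tau^{3/2}$.
\item For $\tau\le H$, the inverse inequality gives $\|e^n_H\|_{L^\infty}\le CH^{-1}(\tau^2+H^2)\le CH$.
\end{itemize}
Together, the error is at most $1/2$ in $L^\infty$ for all sufficiently small $\tau$ and $H$, with no relation between them. The cut-off is therefore inactive, \eqref{uh} is solvable, and $\max_n\|u^n_H\|_{L^\infty}\le C$ for arbitrary $f\in C^2$. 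Only with this bound does your local Lipschitz treatment of the nonlinear term close unconditionally.
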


The remainder of this section is organized as follows. 
First, in Theorem~\ref{thm:conservation_property} we prove that the proposed numerical scheme preserves a discrete energy. 
Second, Theorem~\ref{thm2.2} establishes the uniform boundedness of the numerical solution $\{u^n_H\}$ in the $L^\infty$-norm. 
Finally, in Section~\ref{sec:Proof_of_Main_Theorems} we combine these results to complete the proof of Theorem~\ref{main_thm_2.1}.

Now, we present the conservation property of the proposed numerical scheme.
\begin{theorem}\label{thm:conservation_property}
Assume that \( \tau \) is sufficiently small. Then, the solution \( u^n_H \) of the scheme for \( b(\bm{x})=1\  and \  0 \leq n \leq N \) satisfies the following energy conservation law:
\begin{equation} \label{2.4}
\begin{split}
E^n &:= \|\delta_t u^n_H\|^2 + \frac{1}{2} \left( |u^{n+1}_H|^2_1 + |u^n_H|^2_1 \right) +
\frac{1}{2}\int_{\Omega} V \bigl( |u^{n+1}_H|^2 + |u^n_H|^2 \bigr) \,\mathrm{d}x \\
&\quad + \frac{1}{2} \left( F(|u^n_H|^2) + F(|u^{n-1}_H|^2) \right) = E^0.
\end{split}
\end{equation}


\end{theorem}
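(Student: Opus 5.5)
The plan is the standard discrete energy argument for Crank--Nicolson-type schemes. We test the scheme \eqref{uh} (with $b\equiv 1$) against the admissible function
\[
\varphi = u^{n+1}_H - u^{n-1}_H = 2\tau\,\delta_{\hat t}u^n_H \in V_{\mathrm{LOD}},
\]
take real parts, and show that every term becomes an exact difference between consecutive time levels. The hypothesis that $\tau$ is sufficiently small is used only to guarantee that the discrete solution $u^n_H$ exists, via the fixed-point argument referred to in the introduction. It plays no role in the identity itself.

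We treat the terms one at a time.
\begin{enumerate}
\item \emph{Second difference term.} Write $\delta_t^2u^n_H = (\delta_t u^n_H - \delta_t u^{n-1}_H)/\tau$ and $u^{n+1}_H-u^{n-1}_H = \tau(\delta_t u^n_H + \delta_t u^{n-1}_H)$. Then
\[
\mathrm{Re}\,(\delta_t^2u^n_H,\varphi) = \|\delta_t u^n_H\|^2 - \|\delta_t u^{n-1}_H\|^2,
\]
because the cross terms $(a,b)-(b,a)$ are purely imaginary.
\item \emph{Damping term.} We have $i(\delta_{\hat t}u^n_H,\varphi) = 2\tau i\|\delta_{\hat t}u^n_H\|^2$, which is purely imaginary and so drops out.
\item \emph{Stiffness term.} Since $\bar u^n_H=\tfrac12(u^{n+1}_H+u^{n-1}_H)$, the identity $\mathrm{Re}(a+b,a-b)=\|a\|^2-\|b\|^2$ gives
\[
\mathrm{Re}\,(\nabla\bar u^n_H,\nabla\varphi)=\tfrac12\bigl(|u^{n+1}_H|_1^2-|u^{n-1}_H|_1^2\bigr).
\]
Adding and subtracting $\tfrac12|u^n_H|_1^2$ turns this into $\tfrac12(|u^{n+1}_H|_1^2+|u^n_H|_1^2)-\tfrac12(|u^{n}_H|_1^2+|u^{n-1}_H|_1^2)$.
\item \emph{Potential term.} The same computation applies to the $V$-term, since $V$ is real.
\item \emph{Nonlinear term.} The pointwise product $\bar u^n_H\,\overline{\varphi}$ has real part $\tfrac12(|u^{n+1}_H|^2-|u^{n-1}_H|^2)$. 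By the definition of $\tilde f$ as a difference quotient of $F$,
\[
\mathrm{Re}\bigl(\tilde f(|u^{n+1}_H|^2,|u^{n-1}_H|^2)\bar u^n_H,\varphi\bigr)=\tfrac12\int_\Omega\bigl(F(|u^{n+1}_H|^2)-F(|u^{n-1}_H|^2)\bigr)\,dx,
\]
which telescopes in the same add-and-subtract way. Where $|u^{n+1}_H|^2=|u^{n-1}_H|^2$, we use the integral representation $\tilde f(x,x)=f(x)$, so the identity holds pointwise there as well.
\end{enumerate}

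Summing these contributions yields $E^n=E^{n-1}$ for $n\ge1$, and induction gives $E^n=E^0$. Here $E^0$ involves $u^{-1}_H$ through the initialization $u^{-1}_H=u^1_H-2\tau u_{1,H}$, so that the scheme at $n=0$ is included consistently.

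The main obstacle is bookkeeping rather than analysis: the time indices in the nonlinear and $V$-parts of \eqref{2.4} must be matched to the telescoped quantities. The computation above naturally produces $\tfrac12\int_\Omega\bigl(F(|u^{n+1}_H|^2)+F(|u^n_H|^2)\bigr)\,dx$, with the same index pair $(n+1,n)$ as the gradient and $V$ terms. I would therefore state and verify the energy with this consistent indexing, reading the $F$-term in \eqref{2.4} as this integral. Equivalently, shifting $n\mapsto n-1$ throughout produces the index pair $(n,n-1)$ written there.
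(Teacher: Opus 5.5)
Your proposal is correct and is essentially the paper's proof: the paper tests \eqref{uh} with $\delta_{\hat t}u^n_H$ (your $\varphi$ divided by $2\tau$), takes real parts term by term exactly as you do, multiplies by $4\tau$ and telescopes. Your reindexing of the $F$-term to $\tfrac12\int_\Omega\bigl(F(|u^{n+1}_H|^2)+F(|u^n_H|^2)\bigr)\,dx$ is necessary, because the literal $(n,n-1)$ pairing in \eqref{2.4} does not telescope against $F(|u^{n+1}_H|^2)-F(|u^{n-1}_H|^2)$; it also agrees with the paper's own formula for $E^0$, which uses the pair $(1,0)$, and with this indexing $E^0$ involves only $u^0_H$ and $u^1_H$, not $u^{-1}_H$ as you remarked.
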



\subsection{Existence, Uniqueness and $L^{\infty}$ Boundedness of LOD Solutions}
We now establish the existence, uniqueness, and boundedness of the numerical solution $u^n_H$ in $L^{\infty}$-norm.
\begin{theorem}\label{thm2.2}
 Assuming $\tau$ and $H$ are sufficiently small, the fully discretized problem \eqref{uh} has a unique solution $u^n_H$ that satisfies
\begin{equation}
 \max_{1\leq n \leq N}\|u^n_H\|_{L^{\infty}}\leq C.
\end{equation}

\end{theorem}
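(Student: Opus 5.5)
We prove existence, the $L^\infty$ bound and uniqueness together, by induction on the time level $n$. The $L^\infty$ bound comes from an error estimate combined with an inverse inequality. The paper announces that existence will be obtained via Schaefer's fixed point theorem. Assume $u_H^{n-1},u_H^n$ are known and satisfy $\|u_H^k\|_{L^\infty}\le K$ for $k\le n$, where $K:=\max_t\|u\|_{L^\infty}+1$. We treat $w:=\bar u_H^n=(u_H^{n+1}+u_H^{n-1})/2$ as the unknown, so that $u_H^{n+1}=2w-u_H^{n-1}$. Then \eqref{uh} becomes a nonlinear equation $G(w)=0$ on the finite-dimensional space $V_{\mathrm{LOD}}$.

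\textbf{Existence.} We define $T_\lambda$ by freezing the nonlinear coefficient at a given $v$ and solving the resulting linear problem for $w$, with the nonlinear term multiplied by $\lambda$. The linear problem has the form
\[
\tfrac{2}{\tau^2}(w,\varphi)+\tfrac{i}{\tau}(w,\varphi)+(b\nabla w,\nabla\varphi)+(Vw,\varphi)+\lambda(\tilde f(|2v-u_H^{n-1}|^2,|u_H^{n-1}|^2)w,\varphi)=\text{data}.
\]
For $\tau$ small the bilinear form is coercive, since its real part dominates $\frac{1}{\tau^2}\|w\|^2$ up to bounded potential terms. A truncation of $f$ can be used if needed to get this coercivity for every $v$. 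Hence $T_\lambda$ is well defined and continuous, and it is compact because the space is finite-dimensional. Schaefer's theorem then requires an a priori bound on fixed points $w=T_\lambda w$. To get it, we test with $\varphi=u_H^{n+1}-u_H^{n-1}$ and take real parts, exactly as in the proof of Theorem~\ref{thm:conservation_property}. The term $i(\delta_{\hat t}u,\varphi)$ is purely imaginary, and the $\tilde f$ term telescopes to $\lambda(F(|u^{n+1}|^2)-F(|u^{n-1}|^2))$. This yields an energy identity that bounds $\|u_H^{n+1}\|_1$, and hence $\|w\|$, uniformly in $\lambda$. For the defocusing sign the bound is immediate. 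For the focusing sign we use Gagliardo–Nirenberg, or simply the finite-dimensional setting with $\tau$ fixed, since this bound only needs to be finite for each step and not uniform in $\tau$. Schaefer's theorem then gives existence.

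\textbf{$L^\infty$ bound.} Let $e^{k}=R_Hu^k-u_H^k$. We subtract \eqref{uh} from the projected weak form \eqref{u}. The truncation errors are $O(\tau^2)$ from Taylor expansion, using Assumption~\ref{ass:regularity}, and $O(H^4)$ from Lemma~\ref{lem:lod} applied to $\partial_t u$ and $\partial_t^2 u$. We then test with $e^{n+1}-e^{n-1}$ and take real parts. The nonlinear difference is Lipschitz because $f\in C^2$ and all arguments are bounded by $K$. Here $u_H^{n+1}$ is not yet known to be bounded, so we use the standard cut-off trick: replace $f$ by a version truncated at level $K$, and show a posteriori that the truncation is inactive. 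Summing over steps and applying the discrete Gronwall inequality (Lemma~\ref{lem2.2}) gives
\[
\|\delta_t e^{n}\|+\|e^{n+1}\|_1\le C(\tau^2+H^{3}),
\]
which can be improved to $H^4$ in $L^2$. The inverse inequality on the quasi-uniform coarse mesh, $\|v\|_{L^\infty}\le CH^{-1}\|v\|$ for $v\in V_{\mathrm{LOD}}$ (inherited through $P_H$-stability), then gives
\[
\|e^{n+1}\|_{L^\infty}\le CH^{-1}(\tau^2+H^4)\le1
\]
if $\tau^2\le cH$. In the opposite regime $\tau^2>cH$, we instead bound $\|e\|_{L^\infty}\le C\|e\|_1^{1-\epsilon}\|e\|^{\epsilon}$ via the discrete Sobolev inequality $\|v\|_\infty\le C|\log H|^{1/2}\|v\|_1$. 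Since $H$ is then small relative to $\tau$, $\|e\|_1\le C(\tau^2+H^3)$ still makes this small. Alternatively, we use the temporal–spatial splitting with a time-discrete auxiliary solution to remove any time-step restriction. Combined with $\|R_Hu\|_{L^\infty}\le\max_t\|u\|_{L^\infty}+1/2$, this closes the induction: $\|u_H^{n+1}\|_{L^\infty}\le K$.

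\textbf{Uniqueness.} Suppose there are two solutions $w_1,w_2$ at a step. Both are bounded by the argument above, so $\tilde f$ is Lipschitz on the relevant range. Testing the difference with $w_1-w_2$ gives $(\frac{2}{\tau^2}-C)\|w_1-w_2\|^2\le0$, so $w_1=w_2$ for $\tau$ small.

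The main obstacle is making the $L^\infty$ bound unconditional, that is, free of a $\tau$–$H$ coupling. I would first try the splitting argument: bound the time-discrete solution in $W^{2,p}$, and compare the LOD solution with it, which gives an error of size $H^4$ independent of $\tau$, so the inverse inequality costs only $H^{-1}\cdot H^4$.
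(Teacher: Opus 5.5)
\textbf{The gap: the regime $\tau^2>cH$ of your $L^\infty$ bound.} Your architecture differs from the paper's, but harmlessly so. You do stepwise existence for a truncated scheme, then an energy/Gronwall error estimate, the inverse inequality, and induction in $n$. The paper instead applies Schaefer's theorem once to the whole sequence in $(V_{\mathrm{LOD}})^N$, with the cutoff $\phi[w]$ built into the map. The problem is the second regime. There the discrete Sobolev inequality gives only
$\|e^{n+1}\|_{L^\infty}\le C|\log H|^{1/2}\|e^{n+1}\|_1\le C|\log H|^{1/2}(\tau^2+H^3)$,
and nothing compensates the $\tau^2$ contribution. At fixed $\tau$ with $H\to0$ the right-hand side diverges, so ``$H$ small relative to $\tau$'' works against you. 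Nor is the interpolation $\|e\|_{L^\infty}\le C\|e\|_1^{1-\epsilon}\|e\|^{\epsilon}$ available in two dimensions with an $H$-independent constant. What you actually prove is the bound under a coupling such as $\tau^2|\log H|^{1/2}\le c$, so the induction does not close for $\tau$ and $H$ independently small.

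\textbf{How the paper handles this regime.} It reads a discrete $H^2$ bound off the error equation itself. Once $\tau$ dominates, your energy estimate already gives $\|\delta_t e^n\|+\|e^n\|\le C\tau^2$. Hence $\|\delta_{\hat t}e^n\|\le C\tau^2$ and $\|\delta_t^2e^n\|\le(\|\delta_te^n\|+\|\delta_te^{n-1}\|)/\tau\le C\tau$. Add the Lipschitz bound on the nonlinear difference and the consistency error, and view the error equation as a discrete elliptic problem. This gives $\|\Delta_H\bar e^n\|\le C\tau$ for a discrete Laplacian $\Delta_H$ on $V_{\mathrm{LOD}}$. The two-dimensional discrete interpolation inequality $\|v\|_{L^\infty}\le C\|v\|^{1/2}\|\Delta_Hv\|^{1/2}$ then yields a bound by a positive power of $\tau$ that does not involve $H$; the paper states $C\tau^{2-d/4}$. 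Your split at $\tau^2\le cH$ combined with this step would work. The temporal--spatial splitting you name as a fallback is a legitimate different route, but in your proposal it is only a plan. It needs $H^2$ (or $W^{2,p}$) bounds on the time-discrete Crank--Nicolson solution, uniform in $\tau$, which is a nonlinear stability proof in its own right. It also needs an LOD error estimate for that time-discrete problem.

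\textbf{Secondary issues in existence and uniqueness.} In the focusing case your energy identity gives no a priori bound for the $\lambda$-homotopy, not even at fixed $\tau$ and $H$. The term $-\frac{\lambda}{p+1}\int|u_H^{n+1}|^{p+1}$ grows superquadratically while every other term is quadratic. Already the scalar model $\frac{2}{\tau^2}w-\lambda w^3=d$ has roots of size about $(\lambda\tau^2)^{-1/2}$, which escape to infinity as $\lambda\downarrow0$. Without truncation, $T_\lambda$ is not even well defined for large $v$. So truncating $f$ is essential, not optional. Prove existence for the truncated scheme, where it is immediate for small $\tau$ because the frozen linear problem is uniformly coercive. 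Then transfer it to \eqref{uh} only through the $L^\infty$ bound; this is exactly the role of $\phi[w]$ in the paper. For the same reason, your uniqueness argument gives uniqueness only among solutions with $\|u_H^{n+1}\|_{L^\infty}\le K$. ``Both are bounded by the argument above'' applies to solutions of the truncated scheme, not to an arbitrary second solution of \eqref{uh}. Uniqueness in that restricted sense is what can realistically be claimed; the paper defers this step to the literature.
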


\section{Numerical Experiments }
\label{sec:Numerical experiments}
In this section, we present numerical experiments to verify the convergence rates established in the previous sections. 
For Example~\ref{Example1}, which admits a smooth exact solution, we directly compute errors against the analytical solution to confirm the optimal convergence rate and conservation properties. 
In Examples~\ref{Example2}--\ref{Example5}, where no closed-form exact solution is available, a reference solution is computed on a sufficiently fine mesh and used as a surrogate to evaluate the errors and assess the optimal convergence rate.


In the following, we present two benchmark examples (Examples~\ref{Example1} and~\ref{Example2}) designed to isolate the effect of coefficient heterogeneity:
\begin{itemize}
  \item \textbf{Homogeneous benchmark} (Example~\ref{Example1}): constant coefficient $b(x,y) \equiv 1$;
  \item \textbf{Inhomogeneous benchmark} (Example~\ref{Example2}): spatially varying quartic coefficient $b(x,y) = \bigl[(2.8 + x^2)(2.8 + y^2)\bigr]^2$.
\end{itemize}
Both examples share the same weakly perturbed harmonic potential $V(x,y)$, which provides a smooth setting for method validation.

\begin{example}\label{Example1}
In this numerical example, we consider two-dimensional NLS equation with wave operator, constant coefficient $b(x,y) \equiv 1$, and a smooth potential $V(x,y)$. The computational domain is the unit square $\Omega = [0,1]^2$ and the final time is $T = 1.0$:
\begin{subequations}\label{eq:nls-homogeneous}
\begin{align}
  &\partial_t^2 u + i \partial_t u - \Delta u + V u + |u|^2 u = 0, 
    && (x,y) \in \Omega,\ t \in [0,T], \label{eq:nls-pde} \\
  &u(x,y,t) = 0, 
    && (x,y) \in \partial\Omega,\ t \in [0,T], \label{eq:bc} \\
  &u(x,y,0) = \frac{1}{10} \sin(\pi x) \sin(\pi y), 
    && (x,y) \in \Omega, \label{eq:ic1} \\
  &\partial_t u(x,y,0) = -\frac{i}{10} \sin(\pi x) \sin(\pi y), 
    && (x,y) \in \Omega, \label{eq:ic2}
\end{align}
\end{subequations}
where the smooth potential is given by
\begin{equation}
V(x,y) = -2\pi^2 - \frac{1}{100} \sin^2(\pi x) \sin^2(\pi y). \label{eq:V-hom}
\end{equation}
The exact solution of the above initial-boundary value problem is
\begin{equation}
u(x,y,t) = \frac{1}{10} \sin(\pi x) \sin(\pi y) \, e^{-i t}. \label{eq:exact-hom}
\end{equation}


To demonstrate the spatial accuracy of the proposed algorithm, we fix the time step size at $  \tau = 1.0 \times 10^{-3}  $. The numerical results reported in Table~\ref{tab:lod_spatial_errors_1} show that the LOD method achieves approximately fourth-order convergence in both the $  L^2  $ and $  L^4  $ norms with respect to the spatial mesh size $  H  $, for $  H = 1/2, 1/4, 1/8, 1/16  $. For temporal accuracy, we adopt the relation $  \tau = H^2  $ and observe from Table~\ref{tab:lod_temporal_errors_1} that the method exhibits second-order convergence in both the $  L^2  $ and $  L^4  $ norms for $  \tau = 1/4, 1/16, 1/64, 1/256  $. Hence, these results confirm the theoretical convergence rates established in Theorem~\ref{main_thm_2.1}.


\begin{table}[h!]
\centering
\caption{\(L^2\) and \(L^4\) errors and spatial convergence rates for the LOD method (\(\tau = 1/1000\)).}
\label{tab:lod_spatial_errors_1}
\vspace{0.2cm}
\begin{tabular}{|c|c|c|c|c|}
\hline
\(H\) & \(\|u^n - u^n_H\|_{L^2}\) & Rate & \(\|u^n - u^n_H\|_{L^4}\) & Rate \\
\hline
\(1/2\)  & \(6.7403 \times 10^{-3}\) & —    & \(1.9671 \times 10^{-2}\) & —    \\
\hline
\(1/4\)  & \(4.7559 \times 10^{-4}\) & 3.83  & \(1.2596 \times 10^{-3}\) & 3.97  \\
\hline
\(1/8\)  & \(3.7269 \times 10^{-5}\) & 3.67  & \(9.1335 \times 10^{-5}\) & 3.79  \\
\hline
\(1/16\) & \(3.2311 \times 10^{-6}\) & 3.53  & \(8.0462 \times 10^{-6}\) & 3.50  \\
\hline
\end{tabular}
\end{table}

\begin{table}[h!]
\centering
\caption{\(L^2\) and \(L^4\) errors and temporal convergence rates for the LOD method.}
\label{tab:lod_temporal_errors_1}
\vspace{0.2cm}
\begin{tabular}{|c|c|c|c|c|c|}
\hline
\(H\) & \(\tau\) & \(\|u^n - u^n_H\|_{L^2}\) & Rate & \(\|u^n - u^n_H\|_{L^4}\) & Rate \\
\hline
\(1/2\)  & \(1/4\)    & \(6.0754 \times 10^{-3}\) & —    & \(1.6584 \times 10^{-2}\) & —    \\
\hline
\(1/4\)  & \(1/16\)   & \(4.6204 \times 10^{-4}\) & 1.91  & \(1.2055 \times 10^{-3}\) & 2.00  \\
\hline
\(1/8\)  & \(1/64\)   & \(3.7241 \times 10^{-5}\) & 1.83  & \(9.1018 \times 10^{-5}\) & 1.84  \\
\hline
\(1/16\) & \(1/256\)  & \(3.2113 \times 10^{-6}\) & 1.77  & \(8.0088 \times 10^{-6}\) & 1.76  \\
\hline
\end{tabular}
\end{table}

\begin{figure}[!hbt]
\centering
\begin{minipage}[t]{0.48\textwidth}
\centering
\includegraphics[width=\linewidth]{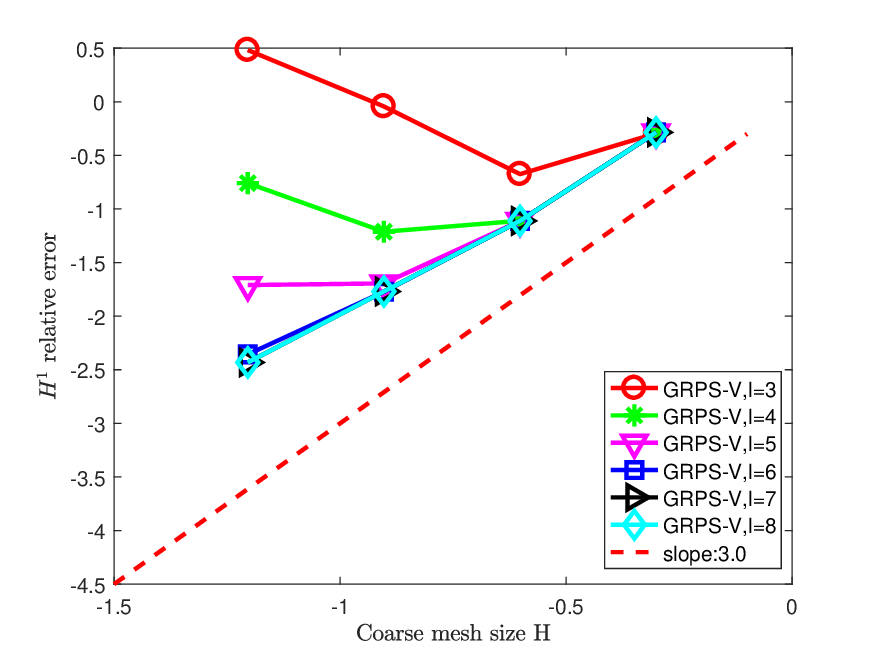}
\caption{Relative \(H^1\) error order. }
\label{fig:H1}
\end{minipage}
\hfill
\begin{minipage}[t]{0.48\textwidth}
\centering
\includegraphics[width=\linewidth]{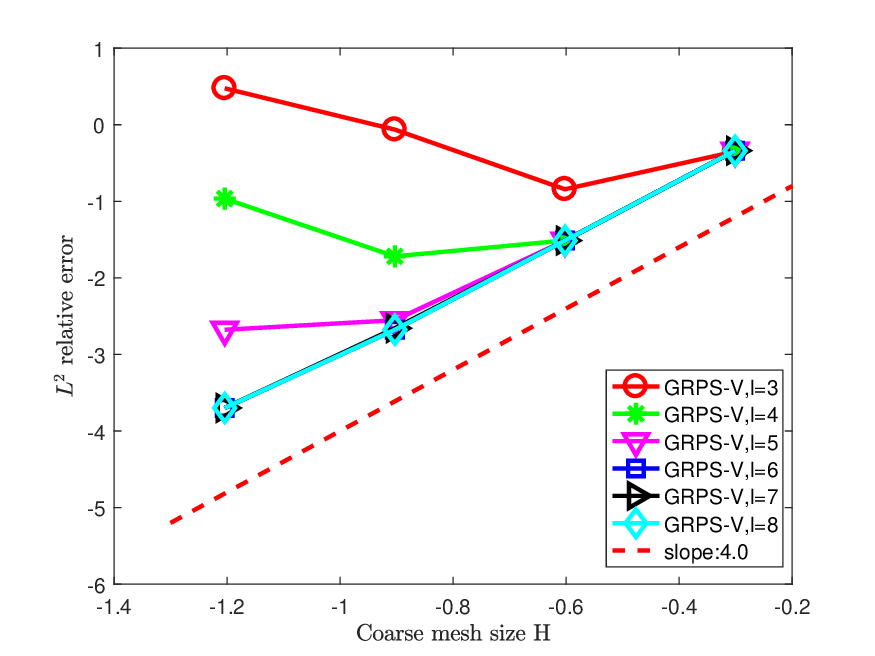}
\caption{Relative \(L^2\) error order.}
\label{fig:L2}
\end{minipage}
\end{figure}


Furthermore, the relative \(H^1\) and \(L^2\) errors are compared against the coarse mesh size \(H\) in Fig.~\ref{fig:H1} and Fig.~\ref{fig:L2}, respectively. The reference solution is computed on a fine mesh with \(h = 1/256\), while the coarse grids correspond to \(N_c = 2, 4, 8, 16\). Both axes are plotted on a logarithmic scale: the horizontal axis denotes the coarse mesh size \(H\), and the vertical axis shows the relative error \(\|u^{N} - u_H^{l,N}\|/\|u^{N}\|\).
These results were obtained using the GRPS-V multiscale basis with localization levels \(l = 3, 4, \dots, 8\), which yields approximately third-order convergence in the $  H^1  $ norm and fourth-order convergence in the $  L^2  $ norm.

\begin{figure}[!hbt]
\centering
\includegraphics[width=0.6\textwidth]{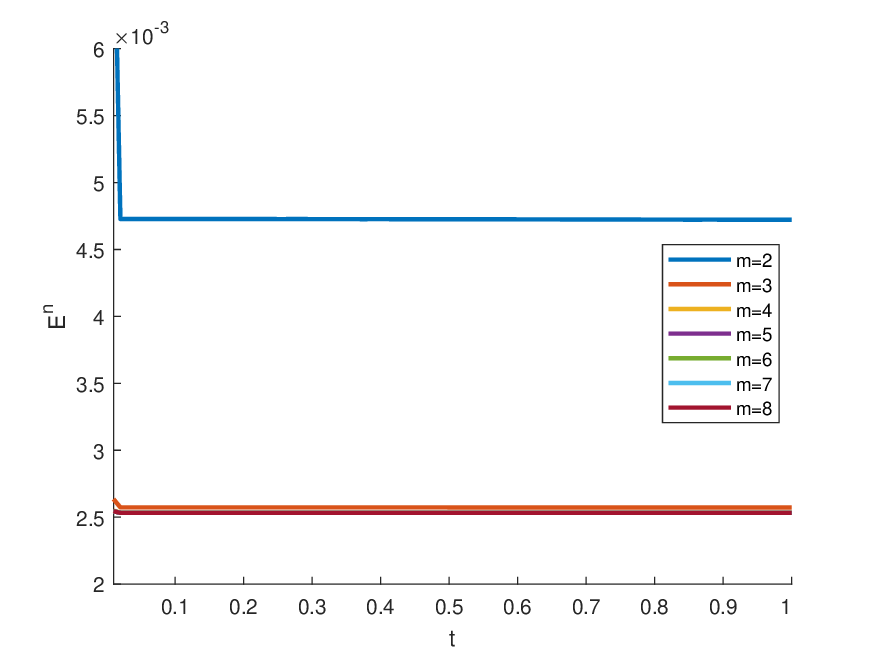}
\caption{Energy conservation \(E^n\) for \(m = 2, 3, 4, 5, 6, 7, 8\), with \(T=1.0\), \(h=1/64\), and \(\tau = 1.0 \times 10^{-2}\).}
\label{fig:energy}
\end{figure}

Finally, we consider a coarse mesh with size $  H = 1/4  $ and time step $  \tau = 1.0 \times 10^{-2}  $. As shown in Fig.~\ref{fig:energy}, the discrete energy $  E^n  $ remains nearly constant for localization layers $  m > 3  $, indicating excellent energy conservation. Figs.~\ref{fig:errE} and~\ref{fig:errE1} further illustrate the energy conservation errors by plotting $  E^n - E^0  $ and $  E^n - E  $, respectively. These numerical results confirm the perfect approximation property of the LOD method, which is in excellent agreement with Theorem~\ref{thm:conservation_property}.

\begin{figure}[!hbt]
\centering
\includegraphics[width=8cm]{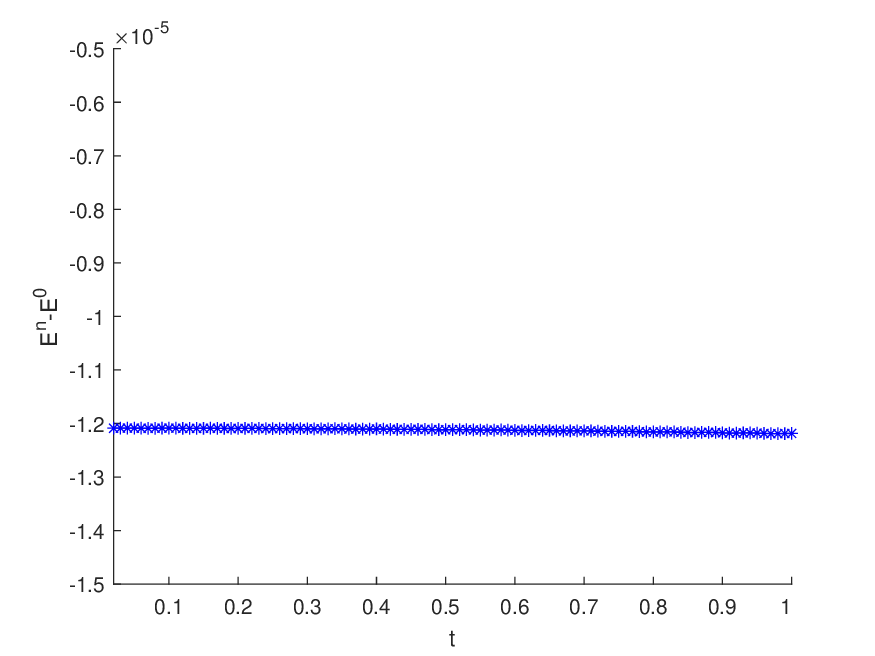}
\caption{Error in energy conservation, \(E^n - E^{0}\), with \(T=1.0\), \(h=1/64\), and \(\tau = 1.0 \times 10^{-2}\).}
\label{fig:errE}
\end{figure}

\begin{figure}[!hbt]
\centering
\includegraphics[width=8cm]{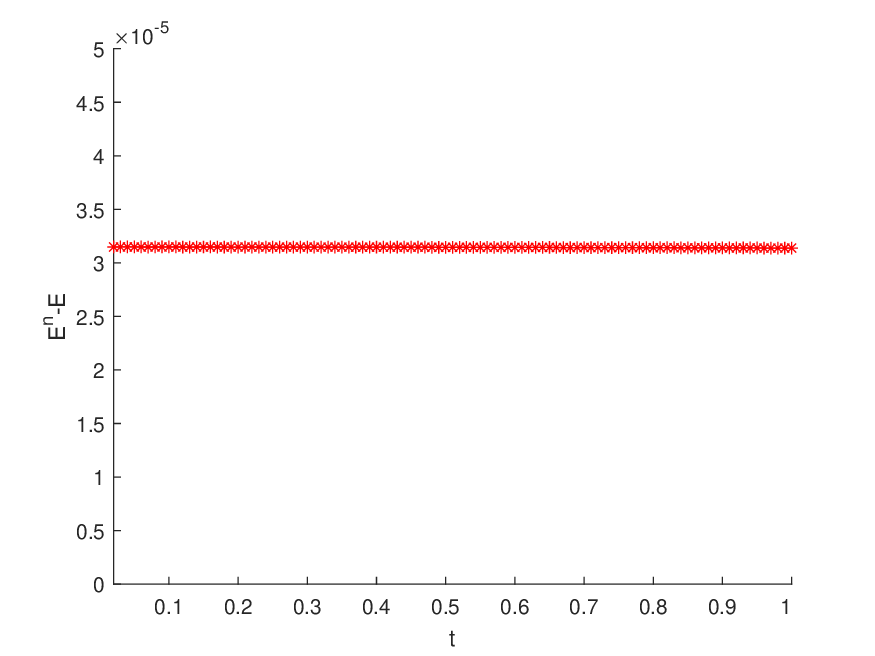}
\caption{Error in energy conservation, \(E^n - E\), with \(T=1.0\), \(h=1/64\), and \(\tau = 1.0 \times 10^{-2}\).}
\label{fig:errE1}
\end{figure}

\end{example}

\begin{example}\label{Example2}
In this example, we consider the two-dimensional NLS equation with wave operator, featuring a spatially varying quartic coefficient $b(x,y)$ and a smooth potential $V(x,y)$. The computational domain $\Omega$ and final time $T$ are the same as in Example~\ref{Example1}:
\begin{subequations}\label{eq:nls-inhom}
\begin{align}
  & \partial_t^2 u + i \partial_t u - \operatorname{div} \bigl( b(x,y) \nabla u \bigr) + V u + |u|^2 u = 0, 
    && (x,y) \in \Omega,\ t \in [0,T], \label{eq:nls-inhom-pde} \\
  & u(x,y,t) = 0, 
    && (x,y) \in \partial \Omega,\ t \in [0,T], \label{eq:bc-inhom} \\
  & u(x,y,0) = \frac{1}{10} \sin(\pi x) \sin(\pi y), 
    && (x,y) \in \Omega, \label{eq:ic1-inhom} \\
  & \partial_t u(x,y,0) = -\frac{i}{10} \sin(\pi x) \sin(\pi y), 
    && (x,y) \in \Omega, \label{eq:ic2-inhom}
\end{align}
\end{subequations}
where the coefficient is given by
\begin{equation}
  b(x,y) = \bigl[ (2.8 + x^2)(2.8 + y^2) \bigr]^2, \label{eq:b-inhom}
\end{equation}
and the smooth potential is the same as in \eqref{eq:V-hom}.

\begin{figure}[!hbt]
    \centering 
    \begin{minipage}[t]{0.48\textwidth} 
        \centering
        \includegraphics[width=\linewidth]{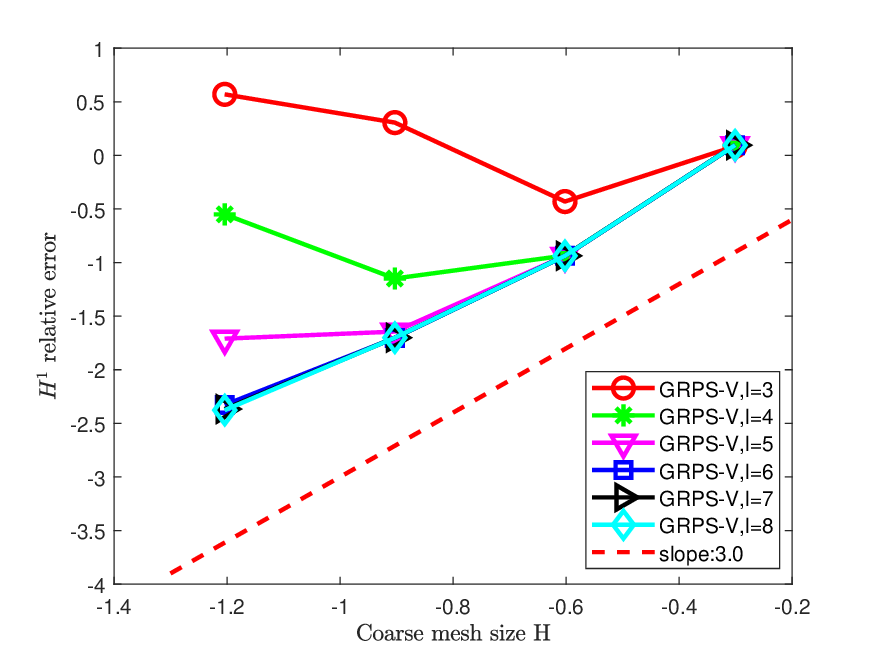} 
        \caption{ $H^1$ relative error.}
        \label{fig:example2_h1} 
    \end{minipage}%
    \hfill 
    \begin{minipage}[t]{0.48\textwidth}
        \centering
        \includegraphics[width=\linewidth]{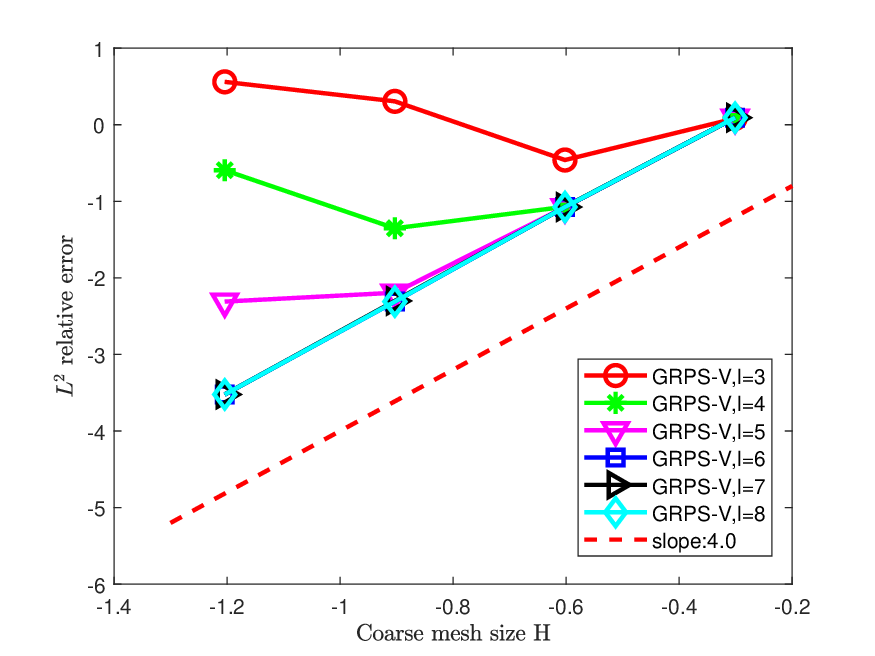}
        \caption{ $L^2$ relative error.}
        \label{fig:example2_l2}
    \end{minipage}
    \label{fig2} 
\end{figure}

Since the exact solution is unknown, we use a reference numerical solution computed by the finite element method on a fine mesh with $  h = 1/64  $ and time step $  \tau = 1.0 \times 10^{-2}  $. The convergence is then studied by varying the coarse mesh size $  H = 1/2, 1/4, 1/8, 1/16  $ while keeping the localization parameter $  l  $ in the range $3,\dots,8$. As shown in Figs.~\ref{fig:example2_h1}--\ref{fig:example2_l2}, the proposed method exhibits approximately fourth-order convergence in the $  L^2  $ norm and third-order convergence in the $  H^1  $ norm.  The numerical results thereby confirm the perfect approximation property of the LOD method, which aligns perfectly with the assertion of Theorem~\ref{main_thm_2.1}.
\end{example}



In the following, we present two benchmark examples (Examples~\ref{Example3} and~\ref{Example4}). The coefficient $  b(x,y)  $ is identical to that used in Examples~\ref{Example1} and~\ref{Example2}, respectively. The examples differ in the potential $  V(x,y)  $: Example~\ref{Example3} features a two-scale harmonic potential with regional optical lattice modulation, while Example~\ref{Example4} consists of a harmonic potential with a constant offset in one half-plane.

\begin{example}\label{Example3}
We consider the two-dimensional NLS equation with wave operator and constant coefficient $b(x,y) \equiv 1$. The potential has a two-scale structure and decomposes as $V = V_1 + V_2$, where $V_1$ is a smooth harmonic background
\[
V_1(x,y) = |x - 0.5|^2 + |y - 0.5|^2,
\]
and $V_2$ is a rapidly oscillating piecewise periodic perturbation:
\begin{equation}
V_2(x,y) =
\begin{cases}
  \bigl(0.01 + \cos(2\pi x / e_1)\bigr) \bigl(0.01 + \cos(2\pi y / e_1)\bigr)
    & 0 \leq x,y \leq 1/2, \\[6pt]
  \bigl(0.01 + \cos(2\pi x / e_2)\bigr) \bigl(0.01 + \cos(2\pi y / e_2)\bigr)
    & \text{otherwise},
\end{cases}
\label{eq:V2-ex3}
\end{equation}
with $e_1 = 1/8$ and $e_2 = 1/16$.

The governing equation is the same as in Example~\ref{Example1}, except for the modified initial conditions:
\begin{subequations}\label{eq:ic-ex3}
\begin{align}
  &u(x,y,0) = \frac{2}{5} \sin(\pi x) \sin(\pi y),
    && (x,y) \in \Omega, \label{eq:ic1-ex3} \\
  &\partial_t u(x,y,0) = -\frac{4\pi i}{5} \sin(\pi x) \sin(\pi y),
    && (x,y) \in \Omega. \label{eq:ic2-ex3}
\end{align}
\end{subequations}

\begin{figure}[!hbt]
    \centering 
    \begin{minipage}[t]{0.48\textwidth} 
        \centering 
        \includegraphics[width=\linewidth]{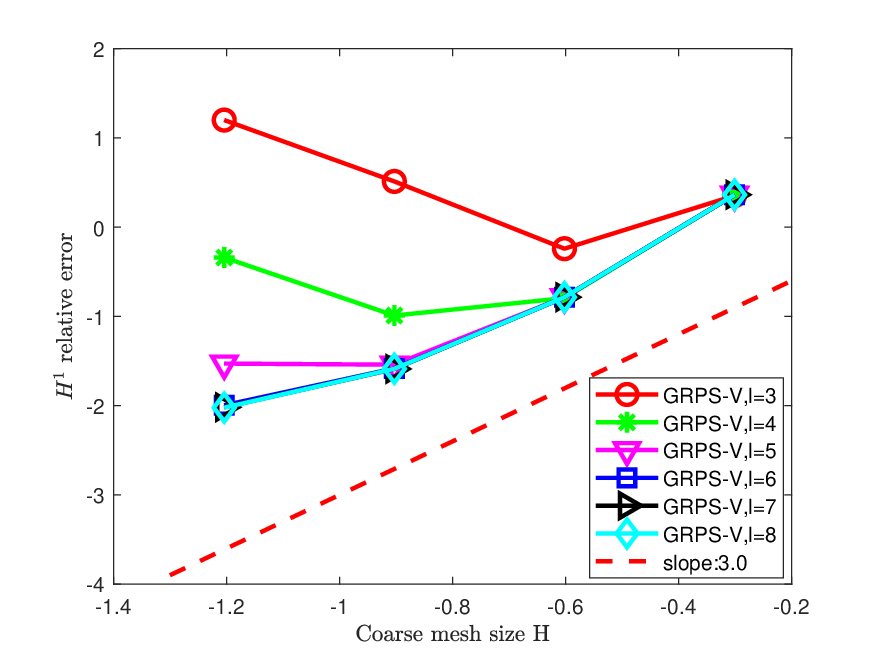} 
        \caption{Example 3: $H^1$ relative error.}
        \label{fig3_h1} 
    \end{minipage}%
    \hfill 
    \begin{minipage}[t]{0.48\textwidth}
        \centering
        \includegraphics[width=\linewidth]{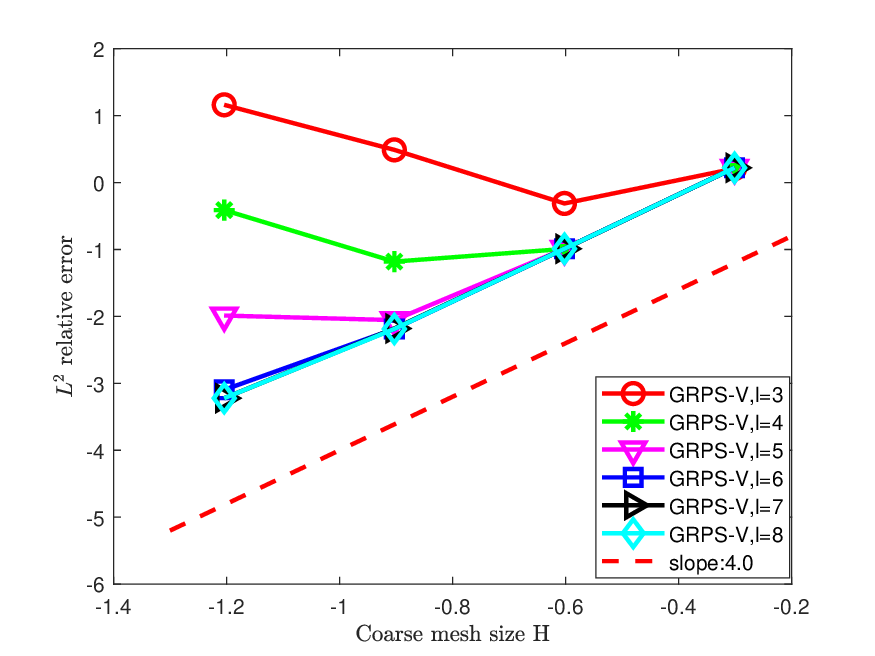}
        \caption{Example 3: $L^2$ relative error.}
        \label{fig3_l2}
    \end{minipage}
    \label{fig3} 
\end{figure}

Similarly, as no analytical solution is available, a reference solution is computed using the finite element method on a fine mesh with $  h = 1/64  $ and time step $  \tau = 1.0 \times 10^{-2}  $. The convergence behavior is studied by successively refining the coarse mesh size $  H = 1/2, 1/4, 1/8, 1/16  $ for localization parameters $  l = 3, \dots, 8  $. 
Figs.~\ref{fig3_h1} and~\ref{fig3_l2} demonstrate that the method attains approximately fourth-order convergence in the $  L^2  $ norm and third-order convergence in the $  H^1  $ norm.
\end{example}

\begin{example}\label{Example4}
In this numerical example, we consider the same model as in Example~\ref{Example2}, except for the piecewise shifted harmonic potential
\[
V(x,y) =
\begin{cases}
  \frac{1}{10}(x^2 + y^2) + 1.8 & x \geq 0, \\
  \frac{1}{10}(x^2 + y^2)        & \text{otherwise}.
\end{cases}
\]

\begin{figure}[!hbt]
    \centering 
    \begin{minipage}[t]{0.48\textwidth} 
        \centering
        \includegraphics[width=\linewidth]{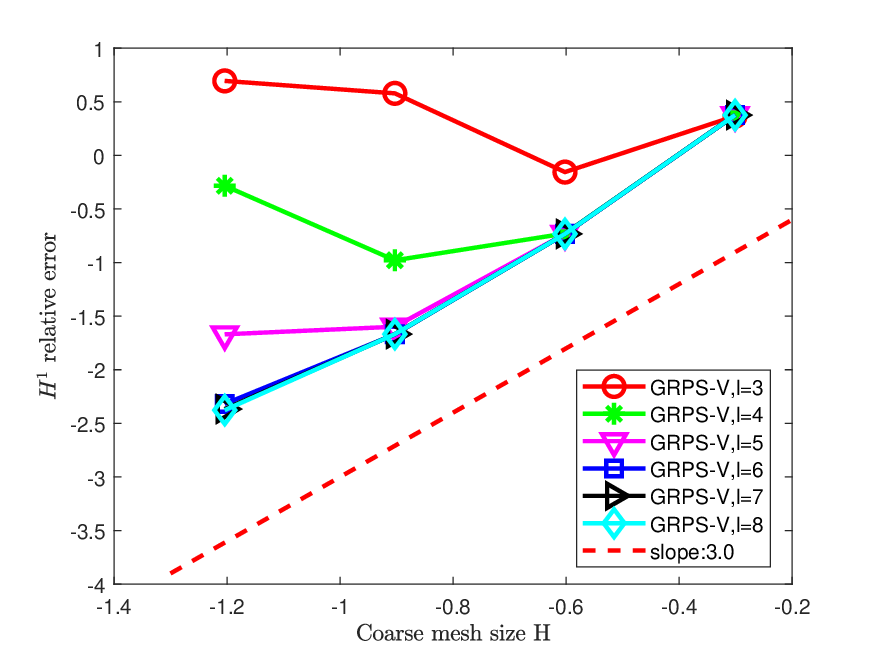} 
        \caption{Example 4: $H^1$ relative error.}
        \label{fig4_h1}
    \end{minipage}%
    \hfill 
    \begin{minipage}[t]{0.48\textwidth}
        \centering
        \includegraphics[width=\linewidth]{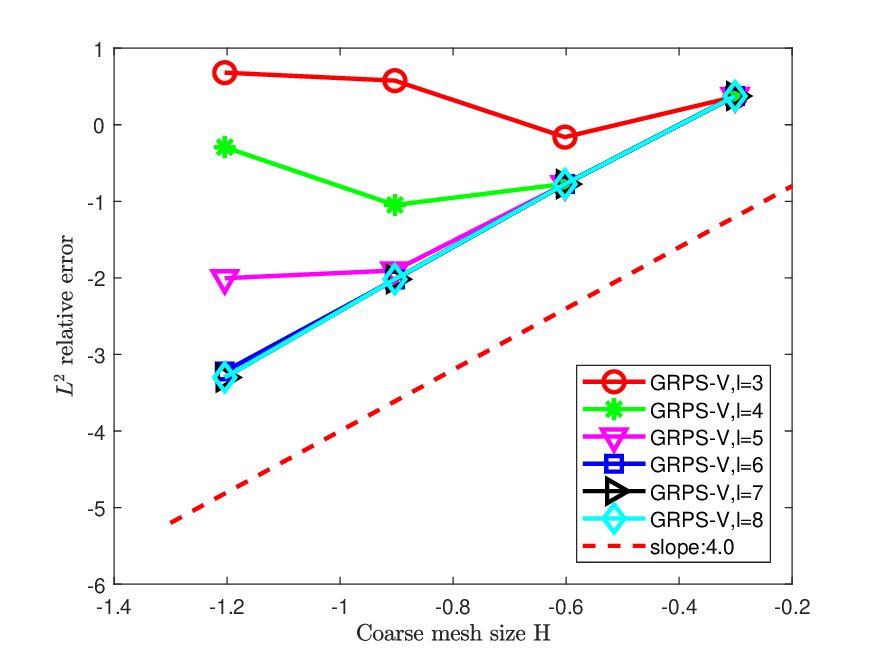}
        \caption{Example 4: $L^2$ relative error.}
        \label{fig4_l2}
    \end{minipage}
    \label{fig4} 
\end{figure}

With the same discretization parameters as in Example~\ref{Example3}, the numerical results demonstrate that the method continues to exhibit roughly fourth-order convergence in the $  L^2  $ norm and third-order convergence in the $  H^1  $ norm (see Figs.~\ref{fig4_h1}--\ref{fig4_l2}).
\end{example}

Finally, we conclude with a particularly challenging test case that combines a deterministically oscillatory coefficient with a randomly heterogeneous potential---a configuration designed to stress-test multiscale numerical methods.
\begin{example}\label{Example5}
In this numerical example, we consider the same model as in Example~\ref{Example2}, but with a random checkerboard-type multiscale coefficient $V(x,y)$ and a rough coefficient $b(x,y)$. The coefficient $b(x,y)$ is given by
\begin{equation}
\begin{split}
b(x,y) &= \frac{1}{6} \Biggl[ \frac{3 + \sin(2\pi x / e_1)}{3 + \sin(2\pi y / e_1)}
  + \frac{3 + \sin(2\pi y / e_2)}{3 + \cos(2\pi x / e_2)} \\
  &\quad + \frac{3 + \cos(2\pi x / e_3)}{3 + \sin(2\pi y / e_3)}
  + \frac{3 + \sin(2\pi x / e_4)}{3 + \cos(2\pi y / e_4)} \\
  &\quad + \frac{3 + \cos(2\pi x / e_5)}{3 + \sin(2\pi y / e_5)}
  + \sin(4x^2 y^2) + 1 \Biggr],
\end{split}
\label{eq:b-multiscale}
\end{equation}
where the frequency parameters are $e_1 = 1/5$, $e_2 = 1/13$, $e_3 = 1/17$, $e_4 = 1/31$, and $e_5 = 1/65$.

The random checkerboard-type multiscale coefficient $V(x,y)$ is defined on the domain $\Omega = [0,1]^2$. It is generated on a fine grid with mesh size $h = 1/128$, where each cell independently takes the value $0.05$ or $20$ with equal probability $1/2$. As illustrated in Fig.~\ref{fig5_left}, the resulting coefficient field is highly oscillatory and exhibits no scale separation.

For the numerical tests, a reference solution is computed on a fine mesh with $  h = 1/128  $, while approximate solutions are obtained on coarse meshes with $  N_c = 2, 4, 8, 16  $. Fig.~\ref{fig5_right} shows the relative $  L^2  $ error $  \|u^N - u_H^{l,N}\| / \|u^N\|  $ versus the coarse mesh size $  H  $ on a log-log scale, exhibiting approximately second-order convergence. Due to the multiscale nature of both coefficients $  b(x,y)  $ and $  V(x,y)  $, the proposed method struggles to fully capture this feature. After extensive numerical testing, Fig.~\ref{fig5_right} shows the most favorable results. This challenging case leads to a reduced convergence rate in the relative $  L^2  $ error.

\begin{figure}[!hbt]
    \centering 
    \begin{minipage}[t]{0.48\textwidth} 
        \centering
        \includegraphics[width=\linewidth]{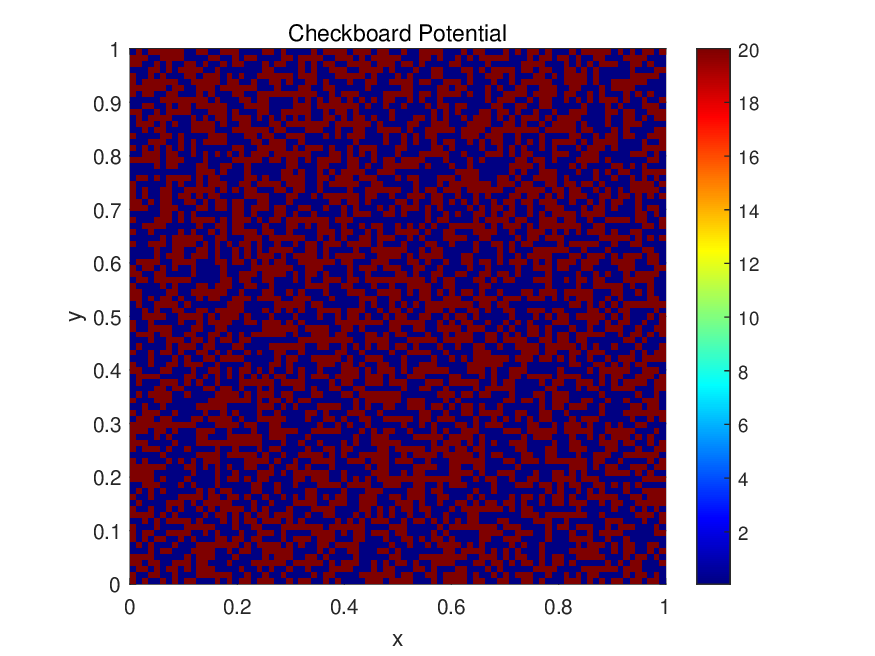} 
        \caption{Checkboard Potential.}
        \label{fig5_left}
    \end{minipage}%
    \hfill
    \begin{minipage}[t]{0.48\textwidth}
        \centering
        \includegraphics[width=\linewidth]{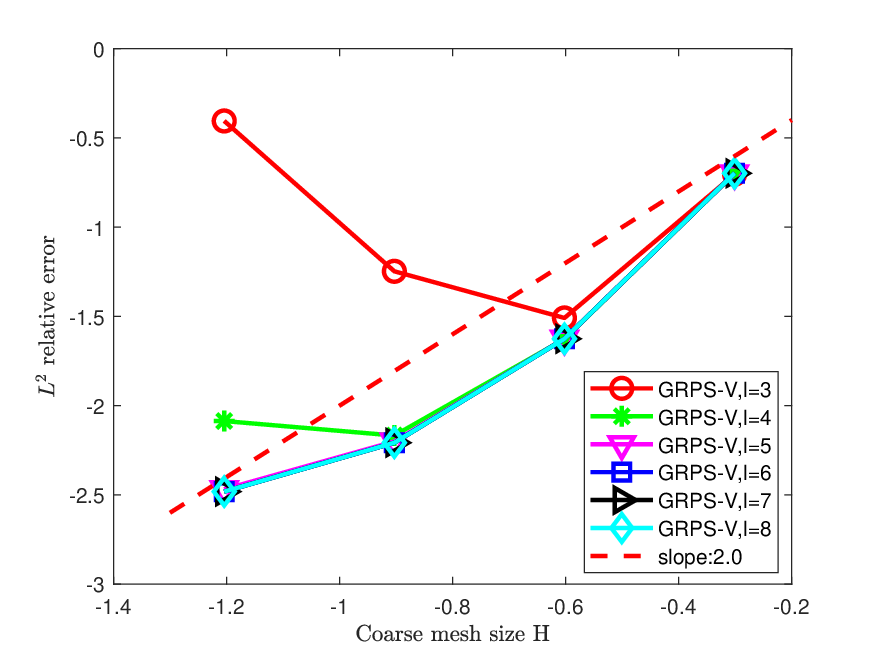}
        \caption{$L^2$ relative error for Checkboard Potential.}
        \label{fig5_right}
    \end{minipage}
    \label{fig5}
\end{figure}    
\end{example}

\section{Proof of Main Theorems}
\label{sec:Proof_of_Main_Theorems}
In this section, we prove the main theorems of the paper: Theorems~\ref{main_thm_2.1}, \ref{thm:conservation_property}, and \ref{thm2.2}.
First, Theorem~\ref{thm:conservation_property} establishes the energy conservation property of the proposed numerical method. Second, Theorem~\ref{thm2.2} proves the boundedness of the numerical solution obtained by the LOD approach. Finally, building upon Theorem~\ref{thm2.2} and Lemmas~\ref{lem3.1}--\ref{lem3.5}, Theorem~\ref{main_thm_2.1} presents the central result of this work — namely, the optimal convergence order of the proposed method.

\subsection{Proof of Theorem \ref{thm:conservation_property}}
First, we prove the discrete energy conservation property of the proposed scheme. The proof is based on taking the real part of the discrete equation.
\begin{proof}
Taking the inner product of \eqref{uh} with \( \delta_{\hat{t}} u^n_H \) and extracting the real part, we obtain:
\begin{multline*} \label{long-eq}
\Re \Big[ (\delta^2_t u^n_H, \delta_{\hat{t}} u^n_H) + i(\delta_{\hat{t}} u^n_H, \delta_{\hat{t}} u^n_H) + (\nabla \bar{u}^n_H, \nabla \delta_{\hat{t}} u^n_H) \\
+ (V \bar{u}^n_H, \delta_{\hat{t}} u^n_H) + \big(\tilde{f}(|u^{n+1}_H|^2, |u^{n-1}_H|^2) \bar{u}^n_H, \delta_{\hat{t}} u^n_H\big) \Big] = 0.
\end{multline*}
This identity can be rewritten as:
\[
M_1 + M_2 + M_3 + M_4 + M_5 = 0,
\]
where
\begin{align*}
M_1 &= \Re(\delta^2_t u^n_H, \delta_{\hat{t}} u^n_H) = \frac{1}{2} \delta_{\bar{t}} (\|\delta_t u^n_H\|^2), \\
M_2 &= \Re(i \delta_{\hat{t}} u^n_H, \delta_{\hat{t}} u^n_H) = 0, \\
M_3 &= \Re(\nabla \bar{u}^n_H, \nabla \delta_{\hat{t}} u^n_H) = \frac{1}{4\tau} \left(|u^{n+1}_H|^2_1 - |u^{n-1}_H|^2_1\right), \\
M_4 &= \Re(V \bar{u}^n_H, \delta_{\hat{t}} u^n_H) = \frac{1}{4\tau} \int_{\Omega} V (|u^{n+1}_H|^2 -|u^{n-1}_H|^2 ) dx, \\
M_5 &= \Re(\tilde{f}(|u^{n+1}_H|^2, |u^{n-1}_H|^2) \bar{u}^n_H, \delta_{\hat{t}} u^n_H) = \frac{1}{4\tau} \left(F(|u^{n+1}_H|^2) - F(|u^{n-1}_H|^2)\right).
\end{align*}

Then, multiplying both sides by \( 4\tau \), we obtain:
\[
2\tau \delta_{\bar{t}} (\|\delta_t u^n_H\|^2) + \left(|u^{n+1}_H|^2_1 - |u^{n-1}_H|^2_1\right) + \int_{\Omega} V (|u^{n+1}_H|^2 -|u^{n-1}_H|^2 )dx   + \left(F(|u^{n+1}_H|^2) - F(|u^{n-1}_H|^2)\right) = 0.
\]
By exploiting the telescoping property of the summation, we derive the following discrete energy conservation law:
\[
\begin{aligned}
\Bigl[ \|\delta_t u^n_H\|^2 
&+ \frac{1}{2}\bigl( |u^{n+1}_H|_1^2 + |u^n_H|_1^2 \bigr) 
+ \frac{1}{2}\int_{\Omega} V \bigl( |u^{n+1}_H|^2 + |u^n_H|^2 \bigr) \,\mathrm{d}x \\
&+ \frac{1}{2}\bigl( F(|u^n_H|^2) + F(|u^{n-1}_H|^2) \bigr) \Bigr] = E^{n-1}.
\end{aligned}
\]
Consequently, the scheme is energy-preserving, satisfying \( E^n = E^{n-1} = \cdots = E^0 \), where the initial energy is given by:
\[
E^0 = \|\delta_t u^0_H\|^2 + \frac{1}{2}(|u^1_H|^2_1 + |u^0_H|^2_1) + \frac{1}{2}\int_{\Omega} V (|u^{1}_H|^2 +|u^{0}_H|^2 )dx + \frac{1}{2}\left[F(|u^1_H|^2) + F(|u^0_H|^2)\right].
\]
\end{proof}

Next, we state a technical lemma that will be used in the proof of Theorem~\ref{thm2.2} and Lemma~\ref{lem3.5}.
\begin{lemma}\label{lem2.5}
\cite{Lix} For any discrete function \( \{\phi^n\} \), the following inequality holds:
\[
\|\phi^{n+1}\|^2 - \|\phi^{n-1}\|^2 \leq 2\tau \left[\|\delta_{\hat{t}} \phi^n\|^2 + \frac{1}{2}(\|\phi^{n+1}\|^2 + \|\phi^{n-1}\|^2)\right].
\]
\end{lemma}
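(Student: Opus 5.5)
The inequality is elementary and holds pointwise in time for any sequence in $L^2(\Omega)$. The plan is to write the left-hand side as a difference of squares and then apply Cauchy--Schwarz and Young's inequality.

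First, by the definition of $\delta_{\hat t}$ we have $\phi^{n+1}-\phi^{n-1}=2\tau\,\delta_{\hat t}\phi^n$. Expanding the real inner product gives
\[
\|\phi^{n+1}\|^2-\|\phi^{n-1}\|^2=\Re\bigl(\phi^{n+1}-\phi^{n-1},\,\phi^{n+1}+\phi^{n-1}\bigr)=2\tau\,\Re\bigl(\delta_{\hat t}\phi^n,\,\phi^{n+1}+\phi^{n-1}\bigr).
\]
Here the cross terms $\Re(\phi^{n+1},\phi^{n-1})$ and $\Re(\phi^{n-1},\phi^{n+1})$ coincide, since $(v,w)=\overline{(w,v)}$, and therefore cancel. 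This is the only point where the complex setting needs any care.

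Second, I apply Cauchy--Schwarz and then Young's inequality $ab\le \tfrac12 a^2+\tfrac12 b^2$ with $a=\|\delta_{\hat t}\phi^n\|$ and $b=\|\phi^{n+1}+\phi^{n-1}\|$. This gives
\[
2\tau\,\Re\bigl(\delta_{\hat t}\phi^n,\phi^{n+1}+\phi^{n-1}\bigr)\le 2\tau\Bigl[\tfrac12\|\delta_{\hat t}\phi^n\|^2+\tfrac12\|\phi^{n+1}+\phi^{n-1}\|^2\Bigr].
\]

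Third, the parallelogram-type bound $\|\phi^{n+1}+\phi^{n-1}\|^2\le 2(\|\phi^{n+1}\|^2+\|\phi^{n-1}\|^2)$ turns the bracket into $\tfrac12\|\delta_{\hat t}\phi^n\|^2+\|\phi^{n+1}\|^2+\|\phi^{n-1}\|^2$.

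This already gives $\|\phi^{n+1}\|^2-\|\phi^{n-1}\|^2\le 2\tau\bigl[\tfrac12\|\delta_{\hat t}\phi^n\|^2+\|\phi^{n+1}\|^2+\|\phi^{n-1}\|^2\bigr]$. Relative to the stated right-hand side, the coefficient on $\|\delta_{\hat t}\phi^n\|^2$ is better (one half instead of one), but the coefficient on the sum of norms is worse (one instead of one half). The constant on the second term is therefore the main obstacle.

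To get the stated constants I would not expand the sum. Instead I would bound $|\Re(\delta_{\hat t}\phi^n,\phi^{n+1}+\phi^{n-1})|$ by $|(\delta_{\hat t}\phi^n,\phi^{n+1})|+|(\delta_{\hat t}\phi^n,\phi^{n-1})|$. Applying Young's inequality to each piece separately gives $\|\delta_{\hat t}\phi^n\|^2+\tfrac12\|\phi^{n+1}\|^2+\tfrac12\|\phi^{n-1}\|^2$, which is exactly the bracket in the statement. Multiplying by $2\tau$ then yields the claimed inequality. This termwise splitting is the route I would commit to; the earlier combined-sum estimate is an equally valid variant with rebalanced constants, and either suffices for the Gronwall argument via Lemma~\ref{lem2.2}.
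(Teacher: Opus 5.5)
Your proof is correct. The identity $\|\phi^{n+1}\|^2-\|\phi^{n-1}\|^2=2\tau\,\Re(\delta_{\hat t}\phi^n,\phi^{n+1}+\phi^{n-1})$, followed by Cauchy--Schwarz and Young applied separately to the two pieces, gives exactly the stated constants; this is the standard argument for the lemma. The paper gives no proof of its own (it quotes the result from the cited reference), so there is nothing further to compare, and your preliminary combined-sum estimate is unnecessary but harmless.
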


Finally, introduce a lemma that will be used in the proof of Theorem~\ref{thm2.2}.
Since all norms on $(V_{\mathrm{LOD}})^N$ are equivalent for fixed $N$ and $H$, the map $A$ is continuous and compact in this finite-dimensional space.
\begin{lemma}
[Schaefer's fixed point theorem {\cite[Chapter 9.2, Theorem 4]{Evans-PDE}}]
\label{THMSchaefer}
If $A:(V_{\mathrm{LOD}})^N\mapsto (V_{\mathrm{LOD}})^N$ is a continuous and compact mapping, 
and the set
	\begin{equation}\label{def-B}
	\mathbb{B}=\{w \in (V_{\mathrm{LOD}})^N: \exists\,\theta\in[0,1] \,\,\,\mbox{such that}\,\,\, w=\theta Aw \}	
	\end{equation}
	is bounded in $(V_{\mathrm{LOD}})^N$, then the map $A$ has at least one fixed point.
\end{lemma}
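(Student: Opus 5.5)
Since $(V_{\mathrm{LOD}})^N$ is finite-dimensional, I would prove Lemma~\ref{THMSchaefer} by reducing it to Brouwer's fixed point theorem through a radial truncation of $A$. Compactness of $A$ is then automatic: a continuous map on a finite-dimensional space sends bounded sets to bounded, hence relatively compact, sets. Fix any norm $\|\cdot\|$ on $(V_{\mathrm{LOD}})^N$; all norms are equivalent here. Identifying $(V_{\mathrm{LOD}})^N$ with $\mathbb{R}^{2NN_H}$ by taking real and imaginary parts of coefficients, every closed ball $\overline{B}_R=\{w:\|w\|\le R\}$ is a compact convex set with nonempty interior. Such a set is homeomorphic to the closed Euclidean unit ball, so Brouwer's theorem applies to continuous self-maps of $\overline{B}_R$.

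\textbf{Step 1 (a priori bound).} By hypothesis the set $\mathbb{B}$ in \eqref{def-B} is bounded, so there is $M>0$ with $\|w\|\le M$ for all $w\in\mathbb{B}$. Set $R=M+1$.

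\textbf{Step 2 (truncation).} Define $\tilde A:\overline{B}_R\to\overline{B}_R$ by
\[
\tilde A w=\begin{cases} Aw, & \|Aw\|\le R,\\[2pt] \dfrac{R\,Aw}{\|Aw\|}, & \|Aw\|>R.\end{cases}
\]
This is the composition of $A$ with the radial retraction onto $\overline{B}_R$. That retraction is continuous because the two formulas agree on $\{\|Aw\|=R\}$. Hence $\tilde A$ is a continuous self-map of $\overline{B}_R$, and Brouwer's theorem gives some $w\in\overline{B}_R$ with $\tilde A w=w$.

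\textbf{Step 3 (exclude the truncated case).} If $\|Aw\|\le R$, then $w=Aw$ and we are done. Otherwise $w=\theta Aw$ with $\theta=R/\|Aw\|\in(0,1)$. This means $w\in\mathbb{B}$, so $\|w\|\le M$. But $\|w\|=\theta\|Aw\|=R=M+1$, a contradiction. Therefore $A$ has a fixed point.

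The only delicate point is the applicability of Brouwer's theorem in Step 2: the ball is taken in an arbitrary norm on a complex space. I would handle this with the identification with $\mathbb{R}^{2NN_H}$ and the standard fact that every compact convex set with nonempty interior is homeomorphic to a closed Euclidean ball. Alternatively, one can take $\|\cdot\|$ to be the Euclidean norm of the coefficient vector from the outset.
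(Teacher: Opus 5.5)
Your argument is correct and follows the standard proof in Evans, which the paper cites without reproducing: truncate $A$ radially onto a ball whose radius exceeds the bound on $\mathbb{B}$, take a fixed point of the truncated map, and rule out the truncated case because it would give an element of $\mathbb{B}$ lying on the sphere of radius $R$. The only difference is that you apply Brouwer's theorem directly rather than Schauder's, which is legitimate and slightly more elementary because $(V_{\mathrm{LOD}})^N$ is finite-dimensional.
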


\subsection{Proof of Theorem \ref{thm2.2}}

We now establish the existence, uniqueness, and boundedness of the solution $u^n_H$ in $L^{\infty}$-norm.
\begin{proof}
Define \(\tilde{u}_{H}^{n} = R_H u^n\).  
Then for all \( v_H \in V_{\mathrm{LOD}} \), \(\tilde{u}_{H}^{n}\) satisfies 
\begin{align}\label{exact-CN-LM}
& \bigl( \delta^{2}_{t} \tilde{u}_{H}^{n}, v_H \bigr) 
   + \bigl( i \delta_{\hat{t}} \tilde{u}_{H}^{n}, v_H \bigr) 
   + \bigl( b \nabla \tilde{\bar{u}}_{H}^{n}, \nabla v_H \bigr) 
   + \bigl( V \tilde{\bar{u}}_{H}^{n}, v_H \bigr) \nonumber \\
& \quad + \Bigl( \tilde{f}\bigl(|\tilde{u}_{H}^{n+1}|^{2}, |\tilde{u}_{H}^{n-1}|^{2}\bigr) \,
            \tilde{\bar{u}}_{H}^{n}, v_H \Bigr)
   = \bigl( R_*^{n}, v_H \bigr),
\end{align}
where \( R_*^{n} \in V_{\mathrm{LOD}} \) denotes the \emph{consistency error} introduced by the finite element method.
Thus, the consistency error \( R_*^n \) can be decomposed as follows:
\begin{align}\label{def-d-start-n}
\begin{split}
(R_*^n, v_H) =&\ 
\bigl((R_H - P_H)\delta_t^2 u^n, v_H\bigr) 
+ \bigl(\delta_t^2 u^n - \partial_t^2 u^n, v_H\bigr) \\[2pt]
&+ i \bigl((R_H - P_H)\delta_{\hat{t}} u^n, v_H\bigr) 
+ i\bigl(\delta_{\hat{t}} u^n - \partial_t u^n, v_H\bigr) \\[2pt]
&+ \bigl(V(\tilde{\bar{u}}_H^n - \bar{u}^n), v_H\bigr) 
+ \bigl(b\nabla(\tilde{\bar{u}}_H^n - \bar{u}^n), \nabla v_H\bigr) \\[2pt]
&+ \Bigl(
    \tilde{f}\bigl(|\tilde{u}_H^{n+1}|^2, |\tilde{u}_H^{n-1}|^2\bigr)\tilde{\bar{u}}_H^n
    - \tilde{f}\bigl(|u^{n+1}|^2, |u^{n-1}|^2\bigr)\bar{u}^n,
    v_H
\Bigr).
\end{split}
\end{align}
Subtracting \eqref{exact-CN-LM} from \eqref{uh} and defining the error as \( e_H^n = u_H^n - \tilde{u}_H^n \), we obtain the following error equation:
\begin{align}\label{error-CN-LM}
\begin{split}
& \bigl( \delta^2_{t} e_H^n, v_H \bigr) 
   +  i \bigl(\delta_{\hat{t}} e_H^n, v_H \bigr) 
   + \bigl( b\nabla \bar{e}_H^n, \nabla v_H \bigr) 
   + \bigl( V\bar{e}_H^n, v_H \bigr) \\[2pt]
&\quad + \Bigl( 
    \tilde{f}\bigl(|u_H^{n+1}|^2, |u_H^{n-1}|^2\bigr) \bar{u}_H^n
    - \tilde{f}\bigl(|\tilde{u}_H^{n+1}|^2, |\tilde{u}_H^{n-1}|^2\bigr) \tilde{\bar{u}}_H^n,
    v_H
\Bigr) \\[2pt]
&= -\bigl( R_*^n, v_H \bigr), \qquad \forall v_H \in V_{\mathrm{LOD}}.
\end{split}
\end{align}

Now, we will prove that $u_H^n$ is a solution of \eqref{uh} if and only if $e_{H}^{n}$ is a solution of \eqref{error-CN-LM} with $u_H^n=\tilde{u}_H^n+e_H^n$.

To show that solutions of \eqref{error-CN-LM} exist for sufficiently small $\tau$ and $H$, we introduce a map $A$ from  $w_H^n$ to $e_H^n$ as follows:
For  $ \forall \ w=(w_H^n)_{n=1}^N \in (V_{\mathrm{LOD}})^N$, we define
\begin{align}\label{def-phi-w}
\phi[w]
= \min\bigg(\frac{1}{\displaystyle\max_{1\le n\le N}\|w_h^n\|_{L^\infty}},1\bigg),
\end{align}
and $u_H^n=\tilde{u}_{H}^{n}+\phi[w] w_H^n $.
By construction, this definition ensures the following point-wise bound
\begin{align}\label{property-phi-w}
\|u_H^n-\tilde{u}_{H}^{n}\|_{L^\infty}=\|\phi[w] w_H^n\|_{L^\infty}\le 1,\ \mbox{for}\,\,\, n=1,\cdots,N.
\end{align}

{\bf{\it Part I: Boundedness of the set $\mathbb{B}$.}}\\

For $w = (w_H^n)_{n=1}^N \in \mathbb{B}$, we have $w = \theta A w$. Setting $e = A w = (e_H^n)_{n=1}^N$, it follows that $w = \theta e$, where $e_H^n$ satisfies \eqref{error-CN-LM} with $u_H^n = \tilde{u}_H^n + \phi[\theta e] \theta e_H^n$. Consequently,
\begin{align}\label{3.21h}
\max_{1\le n\le N} \|u_H^n - \tilde{u}_H^n\|_{L^\infty}
&\le \phi[\theta e] \max_{1\le n\le N} \|\theta e_H^n\|_{L^\infty} \le 1,
\end{align}
where the last inequality follows from \eqref{def-phi-w}.
Thus, $u = (u_H^n)_{n=1}^N$ lies in an $L^\infty$ neighborhood of the reference solution $\tilde{u} = (\tilde{u}_H^n)_{n=1}^N$ in $(V_{\mathrm{LOD}})^N$. Applying the elliptic projection \eqref{uproj} together with the inverse inequality immediately yields
\begin{equation}\label{3.21h2}
\|\tilde{u}_H^n - u^n\|_{L^\infty} \le C H \|u^n\|_{H^2}.
\end{equation}
Combining \eqref{3.21h} and \eqref{3.21h2}, we immediately obtain the uniform $  L^\infty  $ bound
$$\max_{1\le n\le N} \|u_H^n\|_{L^\infty} \le C.$$

To establish the boundedness of $e$ in \((V_{\mathrm{LOD}})^N\), 
we proceed as follows. Taking the inner product of \eqref{error-CN-LM} with \(\delta_{\hat{t}} e_H^n\), 
extracting the real part, and exploiting the uniform boundedness of \(\{u_H^n\}\) yields
\begin{align}\label{3.24h}
\begin{split}
\frac{\|\delta_{t} e_H^{n}\|^2 - \|\delta_{t} e_H^{n-1}\|^2}{2\tau} 
&+ \frac{b_{*}}{4\tau}\bigl(\|e_H^{n+1}\|_1^2 - \|e_H^{n-1}\|_1^2\bigr) \\
&\leq C_1\Bigl(
    \|e_H^{n+1}\|^2 + \|e_H^{n-1}\|^2 
    + \|R_*^n\|^2 + \|\delta_{\hat{t}} e_H^n\|^2
\Bigr).
\end{split}
\end{align}
The estimate for the nonlinear term follows by an argument analogous to that used for the term $  I_3  $ in Lemma~\ref{lem3.5}.

By applying Lemma \ref{lem2.5}, \eqref{3.24h}, we obtain the rewritten form
\begin{align}
\begin{split}\label{3.25h}
 &\frac{\|\delta_{t}e_{H}^{n}\|^2-\|\delta_{t}e_{H}^{n-1}\|^2}{2\tau}+\frac{\|e_{H}^{n+1}\|^2-\|e_{H}^{n-1}\|^2}{2\tau}+\frac{b_{*}}{4\tau}(\|e^{n+1}_H\|^2_1-\|e^{n-1}_H\|^2_1)\\
&\leq C_1(\|e^{n+1}_H\|^2+\|e^{n-1}_H\|^2+\|\delta_{t}e^{n}_H\|^2+\|\delta_{t}e^{n-1}_H\|^2+\|R_*^n\|^2).
\end{split}
\end{align}
Multiplying both sides of the above inequality by $2\tau$ and summing over $n$, while taking $R_H u_0 = u_0$, yields
\begin{align}
\begin{split}
  &\|\delta_t e_H^n\|_2^2 + \frac{b_*}{2} \bigl( \|e_H^{n+1}\|_1^2 + \|e_H^n\|_1^2 \bigr) + \bigl( \|e_H^{n+1}\|^2 + \|e_H^n\|^2 \bigr) \\
  &\leq C(\tau^4 + H^4) + C\tau \sum_{k=1}^n \Bigl[ \|\delta_t e_H^k\|_2^2 + \bigl( \|e_H^{k+1}\|_1^2 + \|e_H^k\|_1^2 \bigr) + \bigl( \|e_H^{k+1}\|^2 + \|e_H^k\|^2 \bigr) \Bigr],
\end{split}
\end{align}
where  $\|R_*^n\| \leq C(\tau^2 + H^2)$ follows readily from \eqref{def-d-start-n}.

Applying Gronwall's inequality and assuming the time step size $  \tau  $ is sufficiently small, we obtain 
\begin{align}\label{e_hn-L2-error}
\max_{1\le n\le N} \bigl( |\delta_t e_H^n| + |e_H^n|_1 + |e_H^n| \bigr) \le C(\tau^2 + H^2).
\end{align}
For $\tau \leq H$, applying the inverse inequality to \eqref{e_hn-L2-error} yields
\begin{align*}
  \|e_H^n\|_{L^\infty} 
  \leq C H^{-d/2} (\tau^2 + H^2) 
  \leq C H^{2 - d/2}.
\end{align*}
When $\tau \geq H$, \eqref{e_hn-L2-error} implies
\begin{equation*}
  \max_{1 \leq n \leq N} \bigl( \|\delta_t e_H^n\| + \|e_H^n\| \bigr) \leq C \tau^2.
\end{equation*}
Consequently, we obtain the estimates
\begin{align*}
  \|\delta_{\hat{t}} e_H^n\| 
  &\leq \frac{\|\delta_t e_H^n\| + \|\delta_t e_H^{n-1}\|}{2} \leq C \tau^2, \\
  \|\delta_t^2 e_H^n\| 
  &\leq \frac{\|\delta_t e_H^n\| + \|\delta_t e_H^{n-1}\|}{\tau} \leq C \tau.
\end{align*}
Applying \eqref{error-CN-LM}, we obtain the following $H^2$ estimate:
\begin{align}\label{H2h-ehn-2}
  b_* \|\Delta \bar{e}_H^n\| 
  \leq C \bigl( \|\delta_t^2 e_H^n\| + \|\delta_{\hat{t}} e_H^n\| 
                + \|e_H^{n+1}\| + \|e_H^{n-1}\| + \|R_*^n\| \bigr)
  \leq C (\tau + \tau^2 + \|R_*^n\|) \leq C \tau.
\end{align}
By applying the discrete interpolation inequality from Lemma 3.4 of \cite{hu2022optimal} together with \eqref{e_hn-L2-error}--\eqref{H2h-ehn-2}, we obtain the following $  L^\infty  $ error estimate:
$$\|e_H^n\|_{L^\infty} \le C \|e_H^n\|^{1 - \frac{d}{4}} \|\Delta e_H^n\|^{\frac{d}{4}} \le C \tau^{2 - \frac{d}{4}}, \quad \text{for } \tau \geq H.$$

Finally, by combining the two cases $  \tau \leq H  $ and $  \tau \geq H  $, we obtain the following $  L^\infty  $ error estimate:
$$  \|e_H^n\|_{L^\infty} \le C \Bigl( \tau^{2 - d/4} + H^{2 - d/2} \Bigr).$$

By combining the two cases where $\tau\le H$ and $\tau\ge H$, we obtain
\begin{equation}\label{2.26}
 \|e_H^n\|_{L^\infty}
\le C(\tau^{2-\frac{d}{4}}+H^{2-\frac{d}{2}}).
\end{equation}
This shows that the set $  \mathbb{B}  $ defined in \eqref{def-B} is bounded in $  (V_{\mathrm{LOD}})^N  $ with respect to the $  L^\infty  $ norm. By applying Lemma~\ref{THMSchaefer}, the operator $  A  $ admits a fixed point, denoted $  e = (e_H^n)_{n=1}^N  $.

Furthermore, when $\tau$ and $H$ are small enough, the inequality \eqref{2.26} leads to
\begin{align}\label{Linfty-ehn-0}
\|e_H^n\|_{L^\infty}\le 1/2.
\end{align}

{\bf{\it Part II: Existence, Uniqueness and $L^{\infty}$-norm boundedness of finite element solution $u^n_h$.}}\\
By the definition of $\phi[\theta e]$ in \eqref{def-phi-w}, the bound \eqref{Linfty-ehn-0} implies $\phi[\theta e] = 1$. Consequently, the fixed point $e = (e_H^n)_{n=1}^N$ satisfies \eqref{error-CN-LM} with $u_H^n = \tilde{u}_H^n + e_H^n$. In fact, $u_H^n$ solves \eqref{uh} if and only if $e_H^n$ solves \eqref{error-CN-LM} with $u_H^n = \tilde{u}_H^n + e_H^n$. This establishes the \emph{existence} of a numerical solution to the implicit scheme \eqref{uh}.

Furthermore, combining \eqref{3.21h2} and \eqref{Linfty-ehn-0} yields the uniform $L^\infty$ bound
\begin{equation*}
  \max_{1\leq n \leq N} \|u_H^n\|_{L^\infty} \leq C.
\end{equation*}

The uniqueness of the solution can be established in a manner analogous to the argument presented in \cite{hu2022optimal}.
\end{proof}

\subsection{ Optimal \texorpdfstring{$L^p$}{Lp} Error Estimate of the LOD Solution }
\label{sec:Optimal_Lp_LOD_estimate}
In order to make an analysis of the LOD solution, we need analyze the
$L^p$ error estimate of the LOD solution. First, we establish a useful lemma by analyzing an elliptic problem \eqref{uproj}.
 \begin{lemma}\label{lem3.1}
Under Assumption~\ref{ass:regularity}, for the elliptic projection $R_H u^n$ of $u^n$ at time $t^n$ ($1 \leq n \leq N$) defined in \eqref{uproj}, we have the following estimates for $2 \leq p < \infty$:
\begin{align}\label{Phu}
\begin{split}
  \max_{1\leq n \leq N} \|R_H u^n - u^n\|_{L^p} &\leq C \|u\|_{W^{4,p}} H^4, \\
  \max_{1\leq n \leq N} \|\partial_t (R_H u - u)\|_{L^p} &\leq C \|\partial_t u\|_{W^{4,p}} H^4, \\
  \max_{1\leq n \leq N} \|\partial_t^2 (R_H u - u)\|_{L^p} &\leq C \|\partial_t^2 u\|_{W^{4,p}} H^4.
\end{split}
\end{align}

\begin{proof}
 To prove \eqref{Phu}, we use a duality argument by considering the auxiliary problem
\begin{align}
\begin{split}
 L\omega=-{\rm div}(b(\bm{x})\nabla\omega)&={\rm sgn}(R_H u^n- u^n)|R_H u^n- u^n|^{p-1}, \ in \ \Omega,\\
\omega &= 0, \  \rm on \ \partial \Omega.
\end{split}
\end{align}
is uniquely solvable for $\omega \in L^p$ and has the following regularity
\begin{align}
\|\omega\|_{W^{2,q}}\leq C\|L\omega\|_{L^q}=C\|R_H u^n- u^n\|^{p-1}_{L^p}.\label{regularity}
\end{align}
Applying \eqref{uproj}, $\rm H\ddot{o}lder's$ inequality with $\frac{1}{p} + \frac{1}{q} = 1$, and the regularity \eqref{regularity}, we obtain
\begin{align*}
  \|R_H u^n - u^n\|_{L^p}^p 
  &= a(R_H u^n - u^n, \omega) \\
  &= a(R_H u^n - u^n, \omega - I_H \omega) \\
  &\leq C \|R_H u^n - u^n\|_{W^{1,p}} \|\omega - I_H \omega\|_{W^{1,q}} \\
  &\leq C H \|R_H u^n - u^n\|_{W^{1,p}} \|\omega\|_{W^{2,q}} \\
  &\leq C H \|R_H u^n - u^n\|_{W^{1,p}} \|R_H u^n - u^n\|_{L^p}^{p-1},
\end{align*}
where $I_H$ denotes the interpolation operator.

Using \eqref{l2.3}, we have
\begin{equation*}
a(R_H u - u, \varphi) = (g, \varphi) \quad \forall \varphi \in W \cap W^{1,q}(\Omega),
\end{equation*}
where $  g = \Delta u + V u  $. Besides, we have
\begin{align*}
  (g,\varphi) & =(g-P_Hg,\varphi)=(g-P_Hg,\varphi-P_H\varphi)\\
             &\leq H^2\|g\|_{W^{2,p}}H\|\varphi\|_{W^{1,q}}\\
             &\leq H^3\|g\|_{W^{2,p}}\|\varphi\|_{W^{1,q}},
\end{align*}
where we use $\rm H\ddot{o}lder's$ inequality and the error estimates of the $L^2$-projection.
\begin{align}
\begin{split}
  b_{*} \|R_Hu-u\|_{W^{1,p}}&\leq\sup_{0\neq\varphi\in W\cap{W^{1,q}(\Omega)}}\frac{a(R_Hu-u,\varphi)}{\|\varphi\|_{W^{1,q}}} =\sup_{0\neq\varphi\in W\cap{W^{1,q}(\Omega)}}\frac{(g,\varphi)}{\|\varphi\|_{W^{1,q}}}\\
   &\leq H^3\|g\|_{W^{2,p}}.
\end{split}
\end{align}
 Then, we have
\begin{align*}
\|R_Hu^n-u^n\|_{L^p}\leq CH\|R_Hu^n-u^n\|_{W^{1,p}}
\leq CH^{4}\|u\|_{W^{4,p}}.
\end{align*}
Following the same argument as above, we readily obtain
\begin{align*}
  \max_{1\leq n \leq N}\|\partial_t(R_Hu-u)\|_{L^p}&\leq C\|\partial_t u\|_{W^{4,p}}H^{4},\\
\max_{1\leq n \leq N}\|\partial^2_t(R_Hu-u)\|_{L^p}&\leq C\| \partial^2_t u\|_{W^{4,p}}H^{4}.
\end{align*}
\end{proof}
 \end{lemma}

The following result appears in Lemma 3.2 of \cite{hu2021} and will be used in the proof of Lemma~\ref{lem3.5}.
\begin{lemma}\label{lem3.12}
  Under Assumption \ref{ass:regularity}, the following estimates hold for the temporal discretization error:
  \begin{align}
   \|\partial^2_tu^n-\delta^2_tu^n\|&\leq C\tau^{\frac{3}{2}}\|\partial^4_tu\|_{L^2(t^{n-1},t^{n+1};L^2)}. \\
   \|\partial_t u^n-\delta_{\hat{t}}u^n\|&\leq C\tau^{\frac{3}{2}}\|\partial^3_tu\|_{L^2(t^{n-1},t^{n+1};L^2)}.
  \end{align}
 \end{lemma}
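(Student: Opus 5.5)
The statement is Lemma~\ref{lem3.12}: two local truncation estimates for the central difference quotients. I would prove both with Taylor's formula with integral remainder in time, applied pointwise in $\bm{x}$, and then take $L^2(\Omega)$ norms. Assumption~\ref{ass:regularity}(IV) gives $\partial_t^3 u,\partial_t^4 u\in L^2(0,T;L^2)$, so the integral remainders are well defined as $L^2(\Omega)$-valued Bochner integrals.

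\textbf{Second estimate.} Expand $u^{n\pm1}$ about $t^n$ to second order with third-order integral remainder:
\[
u^{n\pm1}=u^n\pm\tau\partial_t u^n+\tfrac{\tau^2}{2}\partial_t^2u^n\pm\tfrac12\int_{t^n}^{t^{n\pm1}}(t^{n\pm1}-s)^2\partial_t^3u(s)\,ds .
\]
Subtracting the two expansions, the even terms cancel, and
\[
\delta_{\hat t}u^n-\partial_t u^n=\frac{1}{4\tau}\Big[\int_{t^n}^{t^{n+1}}(t^{n+1}-s)^2\partial_t^3u\,ds+\int_{t^{n-1}}^{t^n}(s-t^{n-1})^2\partial_t^3u\,ds\Big].
\]
Take the $L^2(\Omega)$ norm inside the integral (Minkowski), then apply Cauchy--Schwarz in $s$. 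The weight $(t^{n+1}-s)^2$ has $L^2(t^n,t^{n+1})$ norm $\big(\int_0^\tau r^4\,dr\big)^{1/2}=\tau^{5/2}/\sqrt5$. Each piece is therefore bounded by $\tau^{5/2}\|\partial_t^3u\|_{L^2(t^{n-1},t^{n+1};L^2)}$, and dividing by $4\tau$ gives the rate $\tau^{3/2}$.

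\textbf{First estimate.} Expand to third order with fourth-order remainder:
\[
u^{n\pm1}=u^n\pm\tau\partial_tu^n+\tfrac{\tau^2}{2}\partial_t^2u^n\pm\tfrac{\tau^3}{6}\partial_t^3u^n+\tfrac16\int_{t^n}^{t^{n\pm1}}(t^{n\pm1}-s)^3\partial_t^4u(s)\,ds .
\]
Adding the two expansions cancels the odd terms, so
\[
\delta_t^2u^n-\partial_t^2u^n=\frac{1}{6\tau^2}\Big[\int_{t^n}^{t^{n+1}}(t^{n+1}-s)^3\partial_t^4u\,ds-\int_{t^{n-1}}^{t^n}(t^{n-1}-s)^3\partial_t^4u\,ds\Big].
\]
The same Minkowski and Cauchy--Schwarz step applies, now with $\|(t^{n+1}-s)^3\|_{L^2(t^n,t^{n+1})}=\tau^{7/2}/\sqrt7$. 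Dividing by $\tau^2$ gives the bound $C\tau^{3/2}\|\partial_t^4u\|_{L^2(t^{n-1},t^{n+1};L^2)}$.

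\textbf{Main obstacle.} The only delicate point is justifying Taylor's formula with Bochner-integral remainder at the level of regularity assumed. Since $u\in H^4(0,T;L^2)$, $u$ has an absolutely continuous $C^3$ representative with values in $L^2(\Omega)$. One can also argue by density: prove the identities for smooth $u$ and pass to the limit, which is legitimate because both sides depend continuously on $u$ in $H^4(0,T;L^2)$.
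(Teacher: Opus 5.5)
The paper gives no proof of this lemma and simply imports it from Lemma~3.2 of \cite{hu2021}. Your argument is correct and is the standard proof of exactly this kind of truncation bound: Taylor expansion with integral remainder, then Minkowski and Cauchy--Schwarz in time, justified by $u\in H^4(0,T;L^2)\subset C^3([0,T];L^2)$. There is one cosmetic slip. In the expansion used for the second estimate, the remainder for $u^{n-1}$ should be $+\tfrac12\int_{t^n}^{t^{n-1}}(t^{n-1}-s)^2\partial_t^3u\,ds$, without the extra minus sign. The formula you then state for $\delta_{\hat t}u^n-\partial_t u^n$ is nevertheless the correct one, and the norm bound does not depend on signs anyway.
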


 Now, we will derive optimal $L^p$ error estimate between the elliptic projection and the Galerkin solution.
\begin{lemma}\label{lem3.5}
Suppose that $R_Hu^{n}$ is the elliptic projection at $t = t^n$ with $1 \leq n \leq N$ defined in \eqref{uproj}and $u^{n}_H$ is the Galerkin
solution satisfying the equation \eqref{uh}, then, for
$2 \leq p < \infty$, we have
\begin{align}
  \max_{1\leq n \leq N}\|R_Hu^n- u^n_H\|_{1}+\max_{1\leq n \leq N}\|R_Hu^n- u^n_H\|_{L^p}\leq C (H^{4}+\tau^2).
\end{align}
\begin{proof}
By adding equation \eqref{uproj} to equation \eqref{u} evaluated at $  t = t^n  $ and then subtracting equation \eqref{uh}, we obtain the following expression
\begin{align}
\begin{split}
 &(\partial^2_t u^{n}-\delta^2_{t}u^{n}_{H},\varphi)+i(\partial_t u^{n}-\delta_{\hat{t}}u^{n}_{H},\varphi)+(b\nabla (R_H\bar{u}^{n}-\bar{u}^{n}_{H}),\nabla \varphi)\\
 &=-(V(R_H\bar{u}^{n}-\bar{u}^{n}),\varphi)-(f(|{u}^{n}|^2){u}^{n}-\tilde{f}(|{u}^{n+1}_H|^2,|{u}^{n-1}_H|^2)\bar{u}^{n}_H,\varphi)+(r^n,\varphi),\label{3.25}
\end{split}
\end{align}
where we use $u^n=\frac{u^{n+1}+u^{n}}{2}+O(\tau^2)=\bar{u}^n+O(\tau^2)=\bar{u}^n+r^n$.\\

Next, we estimate the terms in \eqref{3.25}. Defining $\rho^n = R_H u^{n} - u^{n}H$ and $\eta^n = u^{n} - R_H u^{n}$, and choosing $\varphi = \delta{\hat{t}}\rho^{n}$ in \eqref{3.25}, we rewrite it as
\begin{align}
\begin{split}
&(\delta^2_{t}\rho^{n},\delta_{\hat{t}}\rho^{n})+i(\delta_{\hat{t}}\rho^{n},\delta_{\hat{t}}\rho^{n})+(b\nabla \bar{\rho}^{n},\nabla \delta_{\hat{t}}\rho^{n})\\
&=-(\partial^2_t u^{n}-\delta^2_{t}u^{n},\delta_{\hat{t}}\rho^{n})-(\delta^2_{t}\eta^{n},\delta_{\hat{t}}\rho^{n})-i(\partial_t u^{n}-\delta_{\hat{t}}u^{n},\delta_{\hat{t}}\rho^{n})-i(\delta_{\hat{t}}\eta^{n},\delta_{\hat{t}}\rho^{n})\\
&+(V\bar{\eta}^{n},\delta_{\hat{t}}\rho^{n})+(r^n,\delta_{\hat{t}}\rho^{n})-(f(|{u}^{n}|^2){u}^{n}-\tilde{f}(|{u}^{n+1}_H|^2,|{u}^{n-1}_H|^2)\bar{u}^{n}_H,\delta_{\hat{t}}\rho^{n}).
\end{split}
\label{eq:total_identity}
\end{align}
Taking the real part of \eqref{eq:total_identity}, we obtain
\begin{equation}\label{3.10}
  \frac{\|\delta_{t}\rho^{n}\|^2-\|\delta_{t}\rho^{n-1}\|^2}{2\tau}+\frac{b_{*}}{4\tau}(\|\rho^{n+1}\|^2_1-\|\rho^{n-1}\|^2_1) = \bigl( \sum_{j=1}^7 F_j,\delta_{\hat{t}}\rho^{n}\bigr),
\end{equation}
where $ F_1- F_7$ denote the real parts of the seven terms on the right‑hand side of \eqref{eq:total_identity}.

For each term on the right-hand side of \eqref{3.10},  we bound them as follows:
\begin{align*}
\begin{split}
 |(F_1,\delta_{\hat{t}}\rho^{n})|=&\Re(\partial^2_t u^{n}-\delta^2_{t}u^n,\delta_{\hat{t}}\rho^{n})
\leq C\|\partial^4_t u\|^2_{L^2(t^{n-1},t^{n+1};L^{2})}\tau^3+\|\delta_{\hat{t}}\rho^{n}\|^2,
\end{split}\\
\begin{split}
 |(F_2,\delta_{\hat{t}}\rho^{n})|=&\Re(\delta^2_{t}\eta^n,\delta_{\hat{t}}\rho^{n})\leq C\|\delta^2_{t}\eta^{n}\|^2+\|\delta_{\hat{t}}\rho^{n}\|^2\\
&\leq C\|\partial^2_{t}\eta^{n}\|^2+\tau^4+\|\delta_{\hat{t}}\rho^{n}\|^2\\
&\leq C\|\partial^2_{t}u^{n}\|^2H^8+\tau^4+\|\delta_{\hat{t}}\rho^{n}\|^2,
\end{split}\\
\begin{split}
 |(F_3,\delta_{\hat{t}}\rho^{n})|=&\Im(\partial_t u^{n}-\delta_{\hat{t}}u^n,\delta_{\hat{t}}\rho^{n})
\leq C\|\partial^3_t u\|^2_{L^2(t^{n-1},t^{n+1};L^2)}\tau^3+\|\delta_{\hat{t}}\rho^{n}\|^2,
\end{split}\\
\begin{split}
|(F_4,\delta_{\hat{t}}\rho^{n})|=&\Im|(\delta_{\hat{t}}\eta^{n},\rho^{n})|\leq C \frac{1}{\tau}\int^{t^{n+1}}_{t^{n-1}}\|\partial_t\eta\|dt\cdot\|\delta_{\hat{t}}\rho^{n}\|
\leq C H^8 +\|\delta_{\hat{t}}\rho^{n}\|^2,
\end{split}\\
\begin{split}
|(F_5,\delta_{\hat{t}}\rho^{n})|=&\Re|(V\eta^{n}, \delta_{\hat{t}}\rho^{n})|
\leq C\|\eta^{n}\|^2+\|\delta_{\hat{t}}\rho^{n}\|^2\leq CH^{8}+\|\delta_{\hat{t}}\rho^{n}\|^2,
\end{split}\\
\begin{split}
|(F_6,\delta_{\hat{t}}\rho^{n})|=&\Re(r^n,\delta_{\hat{t}}\rho^{n})\leq C(\tau^4+\|\delta_{\hat{t}}\rho^{n}\|^2),
\end{split}\\
\begin{split}
|(F_7,\delta_{\hat{t}}\rho^{n})|=&\Re(f(|{u}^{n}|^2){u}^{n}-\tilde{f}(|{u}^{n+1}_H|^2,|{u}^{n-1}_H|^2)\bar{u}^{n}_H,\delta_{\hat{t}}\rho^{n})\\
=&\Re|(f(|{u}^{n}|^2)({u}^{n}-\frac{1}{2}({u}^{n+1}+{u}^{n-1}))+\frac{1}{2}(f(|{u}^{n}|^2)-\tilde{f}(|{u}^{n+1}|^2,|{u}^{n-1}|^2))({u}^{n+1}+{u}^{n-1})\\
&+\frac{1}{2}(\tilde{f}(|{u}^{n+1}|^2,|{u}^{n-1}|^2)({u}^{n+1}+{u}^{n-1})-\tilde{f}(|{u}^{n+1}_H|^2,|{u}^{n-1}_H|^2)({u}^{n+1}_H+{u}^{n-1}_H),\delta_{\hat{t}}\rho^{n})|\\
&=|(I_1+I_2+I_3,\delta_{\hat{t}}\rho^{n})|.
\end{split}
\end{align*}
\begin{align*}
  I_1&=\frac{1}{2}|f(|{u}^{n}|^2)(2{u}^{n}-({u}^{n+1}+{u}^{n-1})|\leq C\tau^2\|\partial^2_tu^n\|,\\
  I_2&= \frac{|f'(t_{*})|}{4}|[(2|{u}^{n}|^2-|{u}^{n+1}|^2-|{u}^{n-1}|^2)]({u}^{n+1}+{u}^{n-1})|\\
     &\leq C|2{u}^{n}-{u}^{n+1}-{u}^{n-1}|\\
        &\leq C\tau^2\|\partial^2_tu^n\|,
\end{align*}
where we use $\|u^{n}\|_{L^{\infty}}\leq C$ and
\begin{align*}
  f(|{u}^{n}|^2)&=\int^{1}_{0}f(|{u}^{n}|^2)ds \\
   & =\int^{1}_{0}f((1-s)|{u}^{n+1}|^2+s |{u}^{n-1}|^2)ds+\int^{1}_{0}f'(t_{*})[|{u}^{n}|^2-((1-s)|{u}^{n+1}|^2+s |{u}^{n-1}|^2)]ds\\
   & =\tilde{f}(|{u}^{n+1}|^2,|{u}^{n-1}|^2)+\int^{1}_{0}f'(t_{*})[|{u}^{n}|^2-((1-s)|{u}^{n+1}|^2+s |{u}^{n-1}|^2)]ds\\
   &=\tilde{f}(|{u}^{n+1}|^2,|{u}^{n-1}|^2)+\frac{f'(t_{*})}{2}[2|{u}^{n}|^2-|{u}^{n+1}|^2-|{u}^{n-1}|^2],
\end{align*}
here $t_{*}$ is between $|{u}^{n}|^2$ and $\frac{|{u}^{n+1}|^2+|{u}^{n-1}|^2}{2}$.
\begin{align*}
  I_3&= \frac{1}{2}\tilde{f}(|{u}^{n+1}|^2,|{u}^{n-1}|^2)({\rho}^{n+1}+{\eta}^{n+1}+{\rho}^{n-1}+{\eta}^{n-1})\\
  &+\frac{1}{2}[\tilde{f}(|{u}^{n+1}|^2,|{u}^{n-1}|^2)-\tilde{f}(|{u}^{n+1}_H|^2,|{u}^{n-1}_H|^2)]({u}^{n+1}_H+{u}^{n-1}_H) \\
  &=I_{31}+I_{32}.
\end{align*}
\begin{equation*}
  I_{31}\leq C ({\rho}^{n+1}+{\eta}^{n+1}+{\rho}^{n-1}+{\eta}^{n-1})\leq C({\rho}^{n+1}+{\rho}^{n-1}+H^4).
\end{equation*}
As for $I_{32}$,
\begin{align*}
  &\tilde{f}(|{u}^{n+1}|^2,|{u}^{n-1}|^2)-\tilde{f}(|{u}^{n+1}_H|^2,|{u}^{n-1}_H|^2)=\int^{1}_{0}(f(t_1)-f(t_2))ds \\
  & =\int^{1}_{0}f'(t^{*})(t_1-t_2)ds =\frac{f'(t^{*})}{2}[(|{u}^{n+1}|^2+|{u}^{n-1}|^2)-(|{u}^{n+1}_H|^2+|{u}^{n-1}_H|^2)],
\end{align*}
where $t_1=(1-s)|{u}^{n+1}|^2+s |{u}^{n-1}|^2$, $t_2=(1-s)|{u}^{n+1}_H|^2+s |{u}^{n-1}_H|^2$, $t^{*}$ is between $t_1$ and $t_2$.
\begin{align*}
  I_{32} &=\frac{|f'(t^{*})|}{4}[(|{u}^{n+1}|^2+|{u}^{n-1}|^2)-(|{u}^{n+1}_H|^2+|{u}^{n-1}_H|^2)]({u}^{n+1}_H+{u}^{n-1}_H) \\
  &\leq\frac{|f'(t^{*})|}{4}(|{u}^{n+1}|+|{u}^{n-1}|+|{u}^{n+1}_H|+|{u}^{n-1}_H|)(|{\rho}^{n+1}|+|{\rho}^{n-1}|+|{\eta}^{n+1}|+|{\eta}^{n-1}|)({u}^{n+1}_H+{u}^{n-1}_H)\\
  &\leq C ({\rho}^{n+1}+{\rho}^{n-1}+|{\rho}^{n+1}|+|{\rho}^{n-1}|+H^4).
\end{align*}
Thus, we have
\begin{equation*}
  I_3\leq C ({\rho}^{n+1}+{\rho}^{n-1}+|{\rho}^{n+1}|+|{\rho}^{n-1}|+H^4).
\end{equation*}
Then, we obtain
\begin{equation}\label{3.17}
  |(F_7,\delta_{\hat{t}}\rho^{n})|\leq C(\|\delta_{\hat{t}}\rho^{n}\|^2+\|{\rho}^{n+1}\|^2+\|{\rho}^{n-1}\|^2+H^{8}+\tau^4).
\end{equation}
From Lemma \ref{lem2.5}, we obtain
 \begin{equation}\label{3.178}
 \|\rho^{n+1}\|^2-\|\rho^{n-1}\|^2\leq 2\tau[\frac{(\|\delta_t\rho^n\|^2+\|\delta_t\rho^{n-1}\|^2)}{2}+\frac{(|\rho^{n+1}|^2+|\rho^{n-1}|^2)}{2}].
 \end{equation}
Furthermore, from \eqref{3.10}-\eqref{3.178}, we have
\begin{align}\label{3.18}
 \begin{split}
&\frac{1}{2\tau}(\|\delta_t\rho^{n}\|^2-\|\delta_t\rho^{n-1}\|^2)+\frac{b_{*}}{4\tau}(\|\rho^{n+1}\|^2_1-\|\rho^{n-1}\|^2_1)+\frac{1}{2\tau}(\|\rho^{n+1}\|^2-\|\rho^{n-1}\|^2)\\
&\leq C(H^{8}+\tau^4+\|\partial^4_t u\|^2_{L^2(t^{n-1},t^{n+1};L^{2})}\tau^3+\|\partial^3_t u\|^2_{L^2(t^{n-1},t^{n+1};L^2)}\tau^3\\
&+\|\delta_t\rho^n\|^2+\|\delta_t\rho^{n-1}\|^2+\|\rho^{n+1}\|^2+\|\rho^{n-1}\|^2),
 \end{split}
\end{align}
 Multiplying $2\tau$ on both sides of the above inequality and summing up for $n$ and choosing
$R_Hu_0 = u_0$, we obtain
\begin{align}
\begin{split}
&\|\delta_t\rho^{n}\|^2+\frac{b_{*}}{2}(\|\rho^{n+1}\|^2_1+\|\rho^{n}\|^2_1)+(\|\rho^{n+1}\|^2+\|\rho^{n}\|^2)\\
&\leq C(\tau^4+H^{8})+C\tau\sum^{n}_{k=1}[\|\delta_t\rho^k\|^2+(\|\rho^{k+1}\|^2_1+\|\rho^{k}\|^2_1)+(\|\rho^{k+1}\|^2+\|\rho^{k}\|^2)].
 \end{split}
\end{align}
If $\tau$ is small enough, it follows from Lemma \ref{lem2.2} that
 \begin{equation} \label{3.19}
 \|\delta_t\rho^{n}\|^2+\|\rho^{n}\|^2_1+\|\rho^{n}\|^2 \leq C(H^{8}+\tau^4).
 \end{equation}
 Then we have
  \begin{equation} \label{3.20}
\|\rho^{n}\|^2_1 \leq C(H^{8}+\tau^4).
 \end{equation}
Finally, applying the Sobolev embedding theorem and \eqref{3.20}, we arrive at
  \begin{equation} \label{3.21}
\|\rho^{n}\|^2_{L^p} \leq C(H^{8}+\tau^4).
 \end{equation}
\end{proof}
\end{lemma}

\subsection{Proof of Theorem \ref{main_thm_2.1}}
With the above estimates, the proof of Theorem~\ref{main_thm_2.1} is now complete.
\begin{proof}
A combination of Lemma \ref{lem3.1} and Lemma \ref{lem3.5} yields the following error estimate
\begin{equation}
 \max_{1\leq n \leq N}\|u^n-u^n_H\|_{L^p}\leq C(\tau^2+H^{4}).
\end{equation}  
\end{proof}

\section{Conclusion}
This paper proposes a multiscale method based on the Localized Orthogonal Decomposition (LOD) for the NLS equation with wave operator. We establish the existence and uniqueness of the semi-discrete solution and prove that the scheme satisfies a discrete energy conservation law. Furthermore, we rigorously derive optimal superconvergence error estimates in the \(L^2\) and \(H^1\) norms without time-step restrictions. Both theoretical analysis and numerical experiments demonstrate that the proposed LOD method achieves fourth-order convergence in the \(L^2\) norm and third-order in the \(H^1\) norm for the spatial discretization.


\subsection*{Funding} Hanzhang Hu is partially supported by the Guangdong Basic and Applied Basic Research Foundation (2024A1515012430,2026A1515010590). Lei Zhang is partially supported by the National Natural Science Foundation of China (Grant No. 12271360), the Shanghai Municipal Science and Technology Project 23JC1402300, and the Fundamental Research Funds for the Central Universities.

\subsection*{Data Availability}
Data sharing is not applicable to this paper as no datasets were generated or analyzed during
the current study.

\section*{Declarations}
%
\subsection*{Conflict of interest}
The authors declare that they have no conflict of interest.

\bibliographystyle{plain}
\bibliography{ref.bib}
\end{document}